\documentclass{article}
\usepackage[utf8]{inputenc}
\usepackage{amsfonts}
\usepackage{amsmath}
\usepackage{amssymb}
\usepackage{amsthm}
\usepackage{hyperref}
\usepackage{color}
\usepackage{tikz}
\usepackage{dsfont} %pour l'indicatrice
\usepackage{stmaryrd} % pour \llbracket n~;~ p \rrbracket
\allowdisplaybreaks %pour que les align* puissent se faire sur plusieurs pages et ne pas laisser de "blancs"

\usepackage[textwidth=2.5cm]{todonotes}
\presetkeys{todonotes}{size=\scriptsize}{}

\usepackage{mathrsfs} %pour un beau L ronde

\usepackage{comment} % pour les "%"
\usepackage{xcolor}
\usepackage{bbding,pifont} %pour l'astérisque du beau schéma
\usepackage{float} %pour mettre bien une image?
\usepackage[nottoc,numbib]{tocbibind} %pour avoir les réf biblio en mode [Truc97] et pas [1]
\newcommand{\R}{\mathbb{R}}
\newcommand{\E}{\mathbb{E}}
\newcommand{\lr}[1]{\left( #1 \right)}
\newcommand{\lrcr}[1]{\left[ #1 \right]} %crochets
\newcommand{\lrac}[1]{\left\{ #1 \right\}} %accolades
\newcommand{\lrva}[1]{\left| #1 \right|} %|.|
\newcommand{\lrdb}[1]{\left\| #1 \right\|} %||.||
\newtheorem{theorem}{Theorem}

\title{Stochastic Modelling of Telomere Length Dynamics During Hematopoiesis}
\author{Sophie Baland$^{1}$, Simon Toupance$^{2}$ and Denis Villemonais$^{3,4}$}
\begin{document}
	
	\maketitle
	
\footnotetext[1]{Universit\'e de Lorraine, CNRS, Inria, IECL, F-54000 Nancy, France \\
  E-mail: sophie.baland@univ-lorraine.fr}

\footnotetext[2]{Université de Lorraine, Inserm, DCAC, F-54000 Nancy, France \\
  E-mail: simon.toupance@univ-lorraine.fr}

  \footnotetext[3]{Université de Strasbourg,  IRMA, Strasbourg, France\\ E-mail: denis.villemonais@unistra.fr}
  
  \footnotetext[4]{Institut Universitaire de France}

	\begin{abstract}
		We introduce a new stochastic model of telomere length dynamics during hematopoiesis. The numerous cell divisions occurring in this setting makes telomere length dynamics, and in particular their attrition, a crucial aspect of hematopoiesis. After recalling the main aspect of the biological setting, we describe an individual based model of hematopoietic population cells. We then study its behaviour in the large population limit, as well as its fluctuations.
	\end{abstract}
	
	\section{Introduction}

	Mathematical modeling of cellular development has become a fundamental tool in biology and medicine for analyzing and understanding complex biological systems. Advances in quantitative biology and computational methods have enabled the development of models capable of accurately reproducing cellular dynamics and predicting their evolution  \cite{KitanoHiroaki2002SBAB,tomlin2007biology, glynn2014mathematical,voit2017first, alon2019introduction}.

	Telomeres are nucleoprotein structures located at the ends of chromosomes, consisting of repeated TTAGGG sequences, which ensure the protection of genomic integrity. At each cell division, their length progressively decreases due to the end-replication mechanism. When a critical threshold is reached, cells enter senescence or undergo apoptosis. Telomere shortening is influenced by both genetic determinants present at birth and environmental factors acting during postnatal life, as highlighted in~\cite{Benetos2022}. This process is closely associated with cellular ageing, the onset of degenerative diseases, and reduced life expectancy
	~\cite{blackburn2000telomere, aviv2006telomeres}.
	In certain cell types, telomere shortening can be partially compensated by the action of telomerase, an enzyme capable of elongating chromosomal ends. This ribonucleoprotein enzyme uses an intrinsic RNA template to synthesize new telomeric sequences at the end of the DNA strand, after which conventional DNA replication mechanisms restore the double-stranded structure~\cite[Chapter~6]{alberts2015essential}.

	A substantial body of mathematical work has been devoted to understanding telomere length dynamics in humans, see e.g.\cite{levy1992telomere,arino1995mathematical,olofsson1999stochastic,portugal2008computational} for seminal works on telomere shortening modelisation, \cite{ rat2023dynamiques,qi2014stochastic, benetos2024stochastic,olaye2024long,benetos2025branching} for stochastic processes and \cite{hirt2014modelling,wattis2020mathematical, olaye2025inverse} for deterministic models. Similarly, several biological works have focused on telomere length dynamic during hematopoiesis~\cite{prade2007regulation, zimmermann2004effects, greenwood2003telomeres}.
	In particular, the stochastic branching model introduced in~\cite{benetos2024stochastic,olaye2024long} provides a relevant probabilistic framework that explicitly incorporates the role of telomerase in these dynamics. Note that, since cell division rates are heterogeneous across the organism, telomere length depends on the number of divisions, telomerase activity, and the nature of the cells under consideration~\cite{Benetos2022}. Hematopoietic stem cells constitute a particularly relevant system due to their high proliferative capacity and the specific dynamics of their telomeres. In this context, it is natural to investigate telomere length evolution within these cells, as well as more broadly during hematopoiesis, defined as the set of processes responsible for blood cell production. 
	
	The objective of the present work is to propose a continuous-time stochastic branching model to describe the evolution of cell populations during hematopoiesis. This model explicitly incorporates telomere length at the single-cell level, telomere attrition, and telomerase activity in specific cell populations. While several mathematical models have been proposed to describe the temporal evolution of cell populations during hematopoiesis under both physiological and stress conditions, notably in~\cite{adimy2008modelisation,Adimy_2005, roeder2002novel}, to the best of our knowledge, the present work is the first model to integrate all these mechanisms simultaneously within a unified framework.

	In addition to the telomere dynamics, our paper differs from existing stochastic hematopoiesis models, since we consider a model with several compartments to emulate the existence of several cells type  in the haematopoietic hierarchy, where several studies in the literature model hematopoiesis using a limited number of compartments. For instance, in~\cite{adimy2008discrete}, the authors propose a model based on a system of partial differential equations distinguishing two cell populations: immature and mature cells.
	In~\cite{bonnet2021large,bansaye2026hematopoiesis}, the authors investigate erythropoiesis by considering three distinct compartments: hematopoietic stem cells, progenitors, and red blood cells.%\denis{et le papier récent avec un continuum?}\sophieIL{en + de le citer, dois-je développer davantage ce papier?}

	Compared to the branching cell population model with  telomere dynamics proposed in~\cite{benetos2024stochastic,olaye2024long}, the main difference is that cells are not assumed to be homogeneous, since their behavior depends on the compartment to which they belong: for instance, telomerase activity and cell division rates are allowed to vary across compartments. In addition, the underlying state space is inherently reducible, while the irreducibility of the state space played an important role in the long time behavior of the models of both previously cited papers.

		The paper is organised as follows. In Section~...., we present the main biological mechanisms which underpin the construction of our model. In Section~...., we describe in details our stochastic model for the telomere length dynamics during haematopoiesis.
				Once the model is established, we pursue in Section~.... with the study of its mathematical properties. We first show that, in a regime with a large number of cells, the overall behavior of both the cell population and telomere lengths can be approximated by a deterministic dynamics, despite the intrinsically stochastic nature of the underlying mechanisms. To this end, we establish a law of large numbers in a Skorokhod space, characterizing the asymptotic behavior of the process as the initial number of cells tends to infinity, under an appropriate rescaling. We then investigate the stochastic fluctuations of the model around this deterministic approximation. For this purpose, we prove a central limit theorem–type result, again relying on tightness properties in a Skorokhod space and on the characterization of the limiting process through a martingale problem.

		\section{DNA replication, telomere length dynamic and hematopoiesis}
		
		\subsection{Replication forks}

		DNA replication is a fundamental process ensuring the accurate transmission of genetic information prior to cell division and relies on the coordinated action of specialized proteins. Note that DNA repair mechanisms, although essential, are not addressed here. The reader is referred to~\cite{alberts2015essential} and~\cite{carroll2013introduction} for further details.

		\begin{figure}
			\centering
			\includegraphics[width=0.7\textwidth]{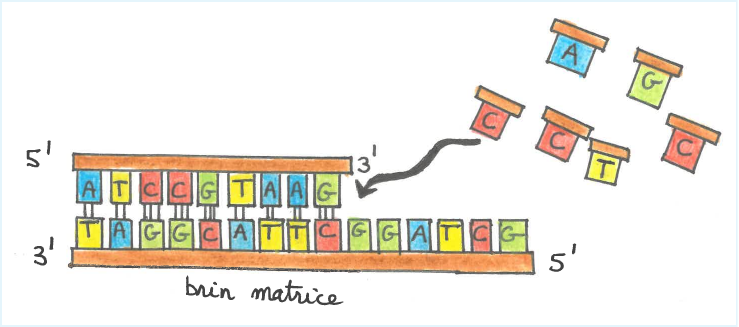}
			\caption{The complementary strand is synthesized using the parental strand as a template. %{\color{gray}Le brin complémentaire se forme à partir du brin parental qui sert de matrice.} 
				(adapted from~\cite{alberts2015essential}, Chapter 6)} 
			\label{figure1}
		\end{figure}

		\begin{figure}
			\centering
			\includegraphics[width=1\textwidth]{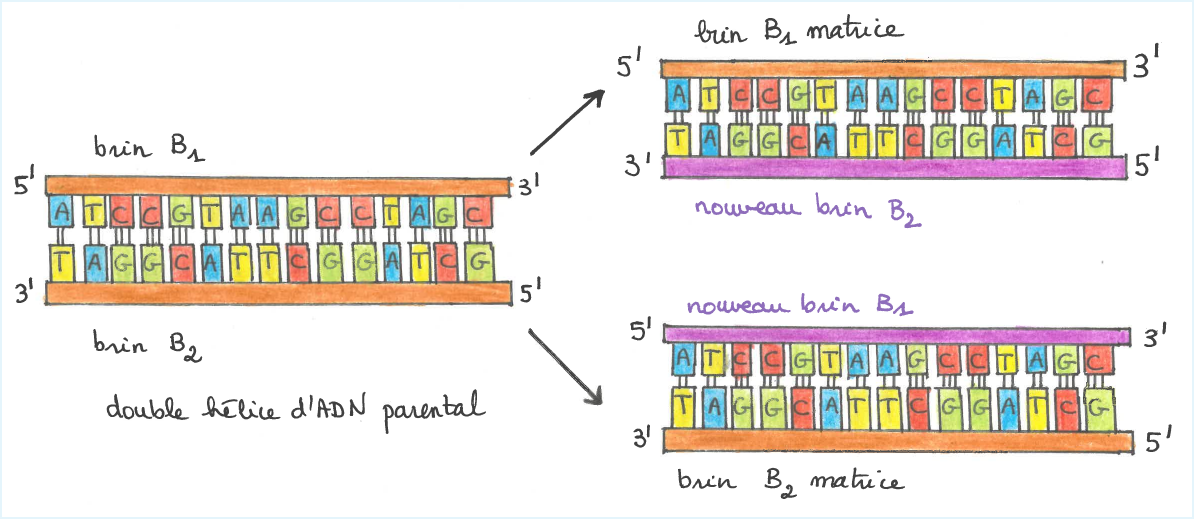}
			\caption{The two strands of the parental helix separate (adapted from~\cite{alberts2015essential}, Chapter 6).} 
			\label{figure2}
		\end{figure}

		During replication, each parental DNA strand serves as a template for the synthesis of a complementary strand, resulting in two identical DNA molecules following strand separation (see Figures~\ref{figure1} and~\ref{figure2}). This process, driven by a highly efficient replication machinery, enables the accurate copying of billions of nucleotide pairs within a limited time, thereby ensuring faithful transmission of genetic information.

		Figure~\ref{figure3} illustrates successive cycles of DNA replication, during which each cell division produces two double helices identical to the parental molecule. This process is referred to as semi-conservative, as each daughter molecule consists of one parental strand and one newly synthesized strand.
		
		\begin{figure}
			\centering
			\includegraphics[width=0.9\textwidth]{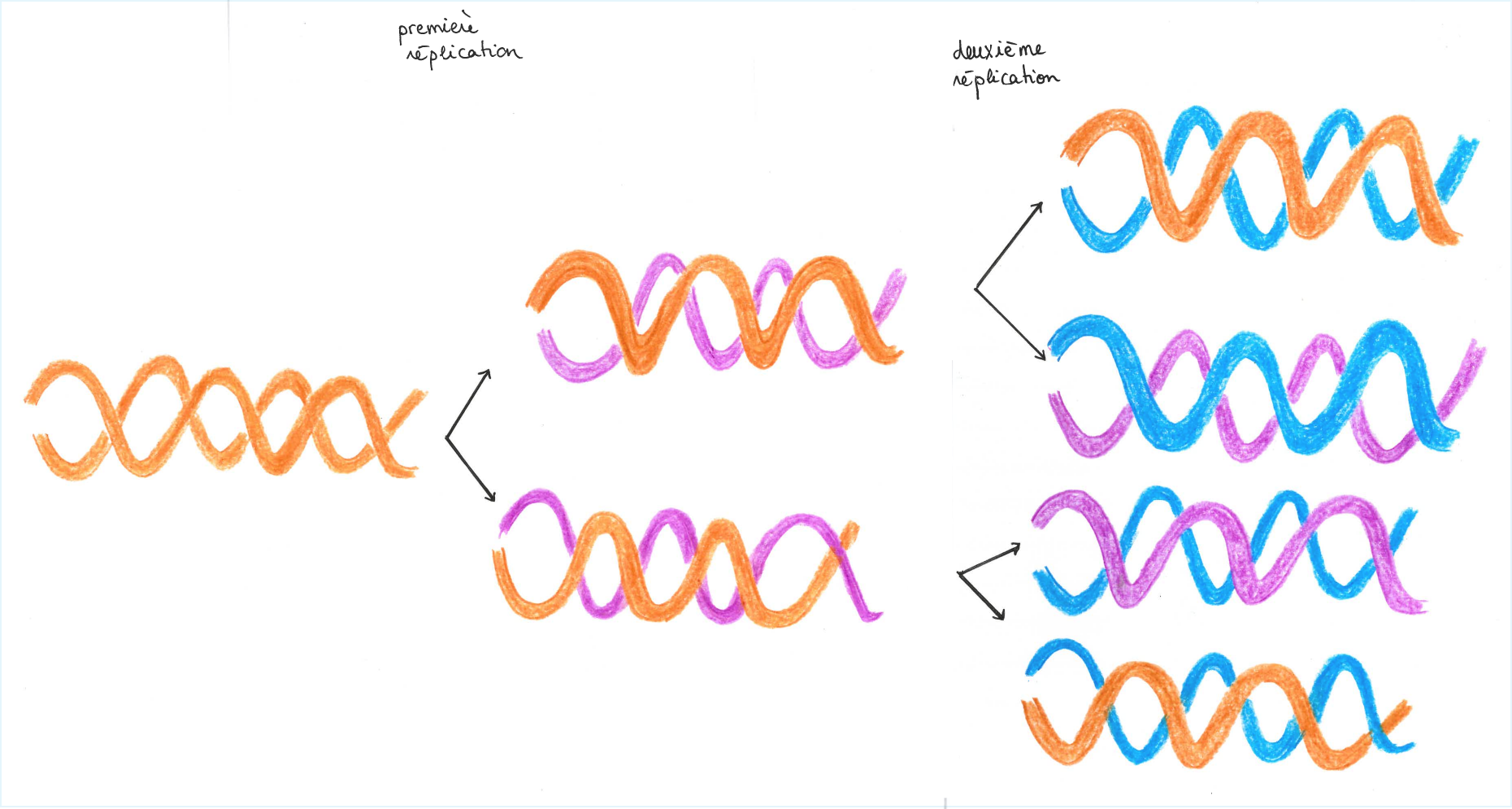}
			\caption{Two successive rounds of replication from two parental DNA strands (adapted from~\cite{alberts2015essential}, Chapter 6).} 
			\label{figure3}
		\end{figure}

		The DNA double helix is stabilized by hydrogen bonds between its two strands, whose their separation is required to allow each strand to serve as a template during replication. This process initiates at specific DNA sequences, known as origins of replication, where initiation proteins recognize the DNA and induce local unwinding of the double helix (see Figure~\ref{figure4}). Specialized enzymes, called helicases, are responsible for breaking hydrogen bonds and promoting strand separation. These regions, often enriched in A–T base pairs, which are easier to dissociate than C–G pairs, constitute preferential sites for replication initiation. In eukaryotic organisms, particularly in humans, multiple origins of replication are activated simultaneously to optimize the duration of the process. Once initiation has occurred, a coordinated set of proteins ensures the proper progression of the replication machinery.

		\begin{figure}
			\centering
			\includegraphics[width=0.5\textwidth]{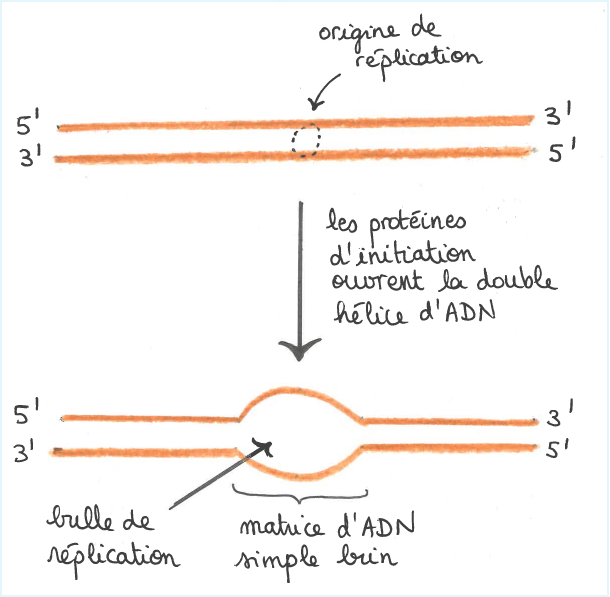}
			\caption{The origin of replication enables the opening of the DNA double helix (adapted from~\cite{alberts2015essential}, Chapter 6).} 
			\label{figure4}
		\end{figure}

		Replication forks are Y-shaped structures that form during local unwinding of the DNA double helix, where parental strands serve as templates for the synthesis of complementary strands. Originating from replication origins, they progress in opposite directions, reflecting bidirectional replication, with a speed of approximately 100 base pairs per second in humans. 

		Note that, due to the directionality of DNA polymerase, replication at the fork is inherently asymmetric: there are two ends called $3'$ and $5'$, and  DNA polymerase synthesizes exclusively in the  $5'$
		to  $3'$
		direction, . As a result, one strand is synthesized continuously (leading strand), whereas the other is produced discontinuously as Okazaki fragments (see  Figure~\ref{figure8}) that are later joined together (lagging strand).
		
		%htbp
		\begin{figure}
			\centering
			\includegraphics[width=1\textwidth]{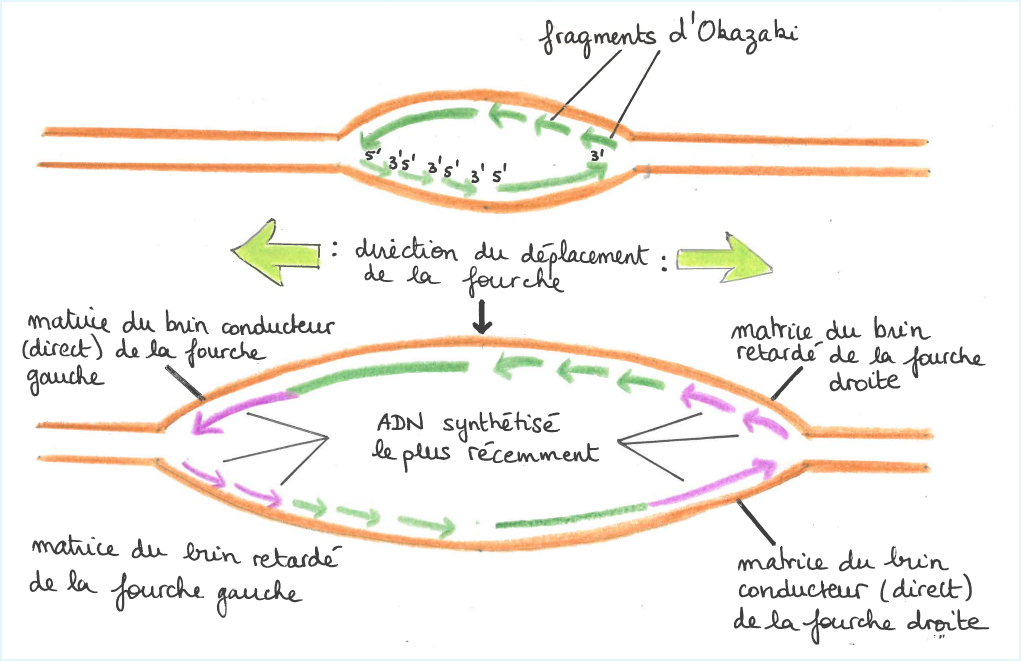}
			\caption{Asymetry of the replication fork. (adapted from~\cite{alberts2015essential}, Chapter~6)} 
			\label{figure8}
		\end{figure}

		\subsection{The end-replication problem and telomerase}
		\label{soussectelomerase}

		Telomeres play a fundamental role in maintaining genome stability in eukaryotic cells, as they condition replicative capacity and influence cellular aging and transformation processes. Their progressive shortening, primarily driven by the end-replication problem identified in the 1970s 
			\cite{watson1972origin, olovnikov1971principle, olovnikov1973theory}, constitutes a central biological phenomenon associated with senescence, apoptosis, and various pathologies.
			The progressive shortening is due to the fact that
			the DNA replication machinery is unable to fully replicate chromosomal ends, leading to a gradual loss of telomeric sequences at each cell division. 
			
			To understand the end-replication problem, we observe that telomeres  exhibit a single-stranded $3'$ overhang, rich in guanine, whose length is variable~\cite{grach2013telomere}. During telomere replication (see Figure~\ref{figure9}), lagging-strand synthesis remains incomplete due to the inability to fully replace the terminal RNA primer, resulting in shortening of the newly synthesized strand. Following replication, daughter molecules display asymmetric ends, with one carrying a 
			$3'$ overhang and the other a blunt end. To restore a consistent structure, nucleases process the blunt ends to generate new 
			$3'$
			overhangs, therefore shortening the  previously existing $5'$ end. The length of these overhangs thus contributes to progressive telomere shortening over successive cell divisions. 
			
			%htbp
			\begin{figure}
				\centering
				\includegraphics[width=1.12\textwidth]{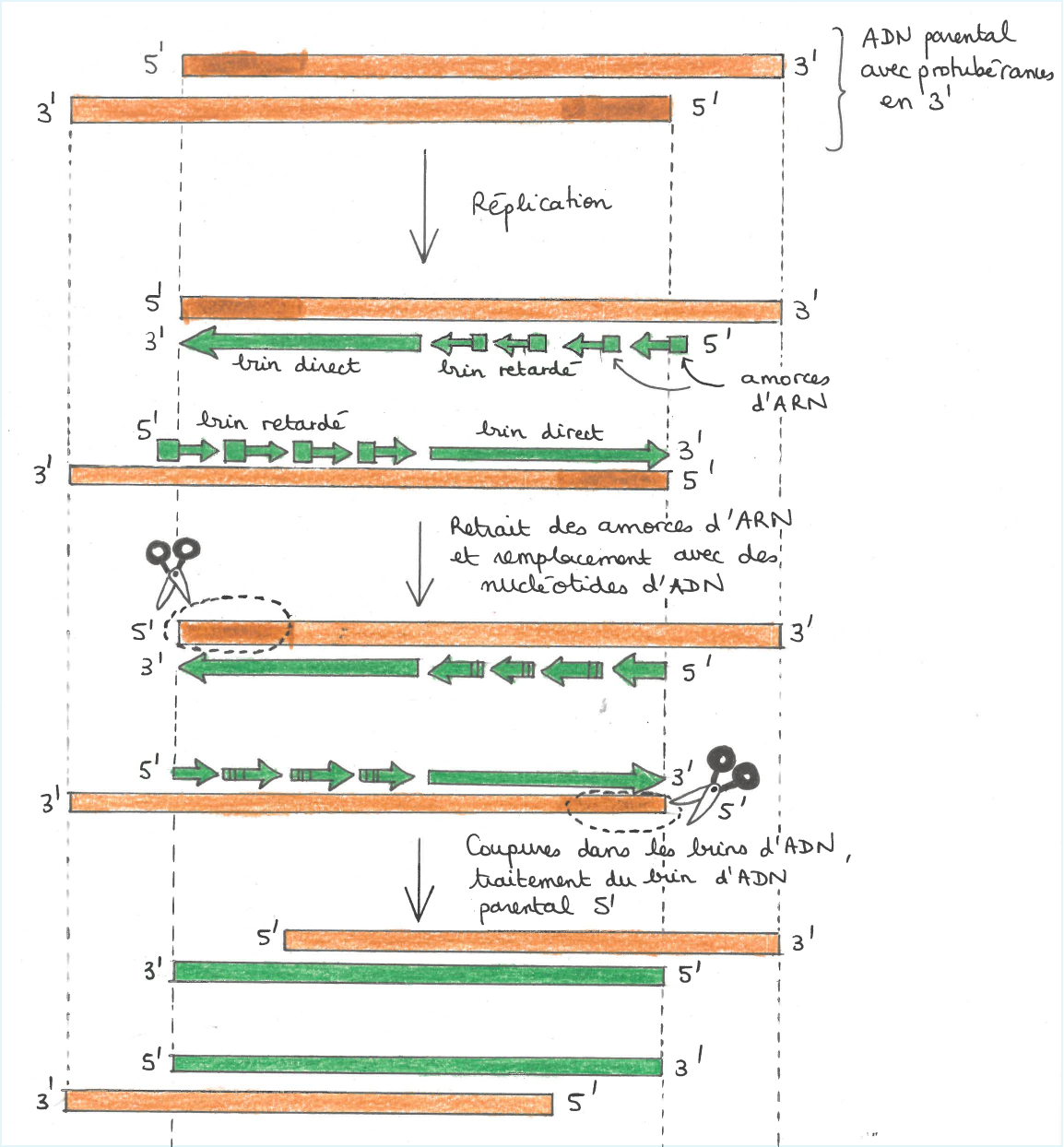}
				\caption{The end-replication problem (adapted from~\cite{grach2013telomere})} 
				\label{figure9}
			\end{figure}

			\begin{figure}
				\centering
				\includegraphics[width=1\textwidth]{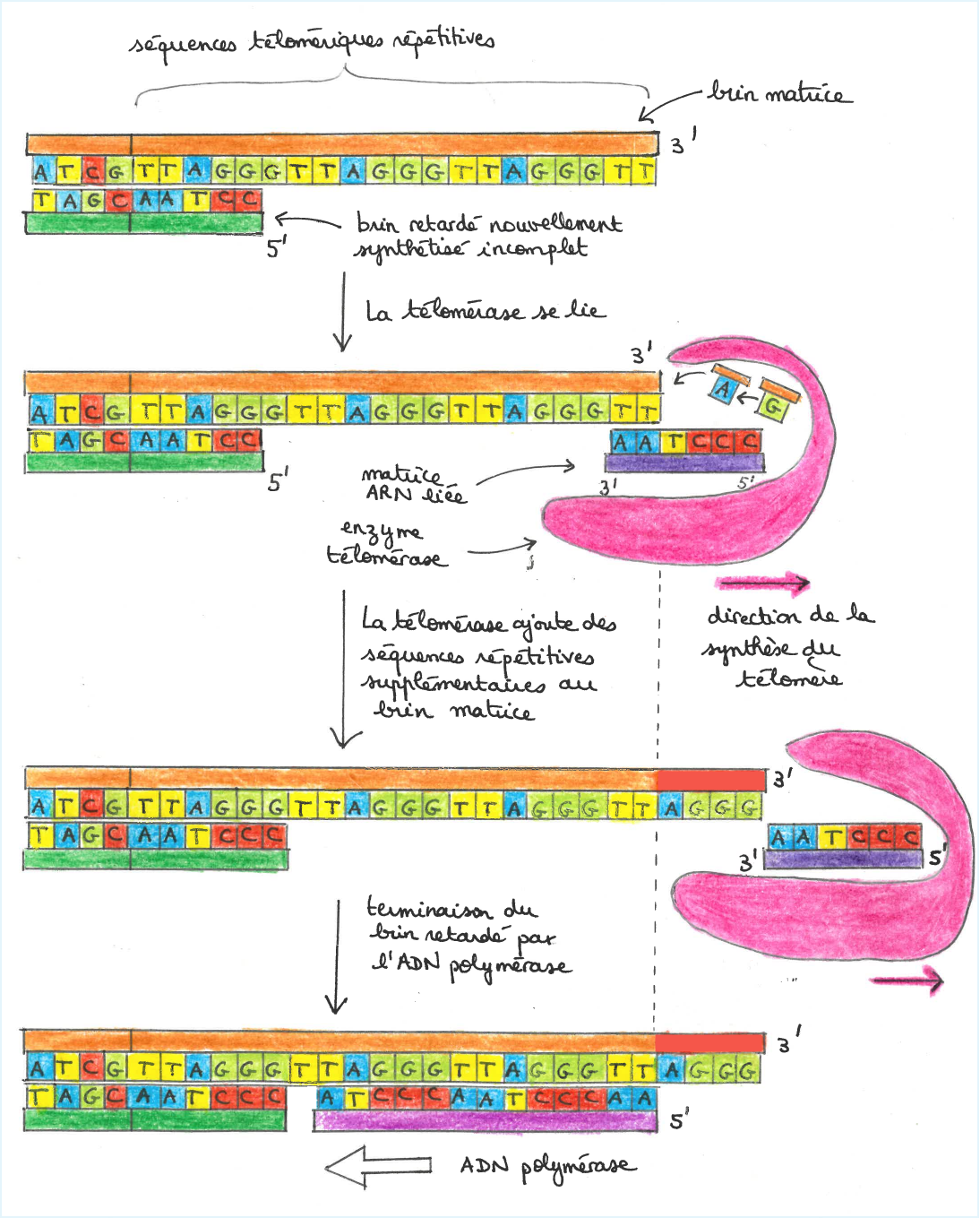}
				\caption{The mechanism of action of the telomerase enzyme (adapted from~\cite{alberts2015essential}, Chapter 6).} 
				\label{figure10}
			\end{figure}

			The inability to fully replicate chromosomal ends leaves an unreplicated region after removal of RNA primers. Without a compensatory mechanism, this limitation lead to progressive telomere shortening at each cell division. This effect is, in some cell types, counteracted by telomerase, an enzyme that binds to chromosome ends and, using an internal RNA template, synthesizes telomeric repeats at the $3'$ end of the parental strand (see Figure~\ref{figure10}). This extension subsequently enables complete replication of the lagging strand through conventional mechanisms. Telomerase was discovered in 1985 by Elizabeth Blackburn, Carol Greider, and Jack Szostak, who were awarded the Nobel Prize in Physiology or Medicine in 2009. 
			We emphasize that telomerase is not active in all cell types, and is actually not active in most cell tissues~\cite{shay2019telomeres,kim1994specific, shay1997survey}, but is strongly expressed in germinal cells, stem cells (and particularly in hematopoietic stem cells), some cells from the immune system~\cite{blackburn2000telomere,ju2006telomeres, shay1997survey} and most cancer cells~\cite{shay1997survey}.

			\subsection{Hematopoiesis}

			Hematopoiesis refers to the process by which blood cells are produced, including leukocytes, erythrocytes, and platelets, with a daily production exceeding $4.4\times 10^{11}$
			cells with varying lifespans. All these cells originate from hematopoietic stem cells (HSCs), primarily located in the bone marrow, which can either self-renew through symmetric division or differentiate through asymmetric division.

			\begin{figure}
				\centering
				\includegraphics[width=0.8\textwidth]{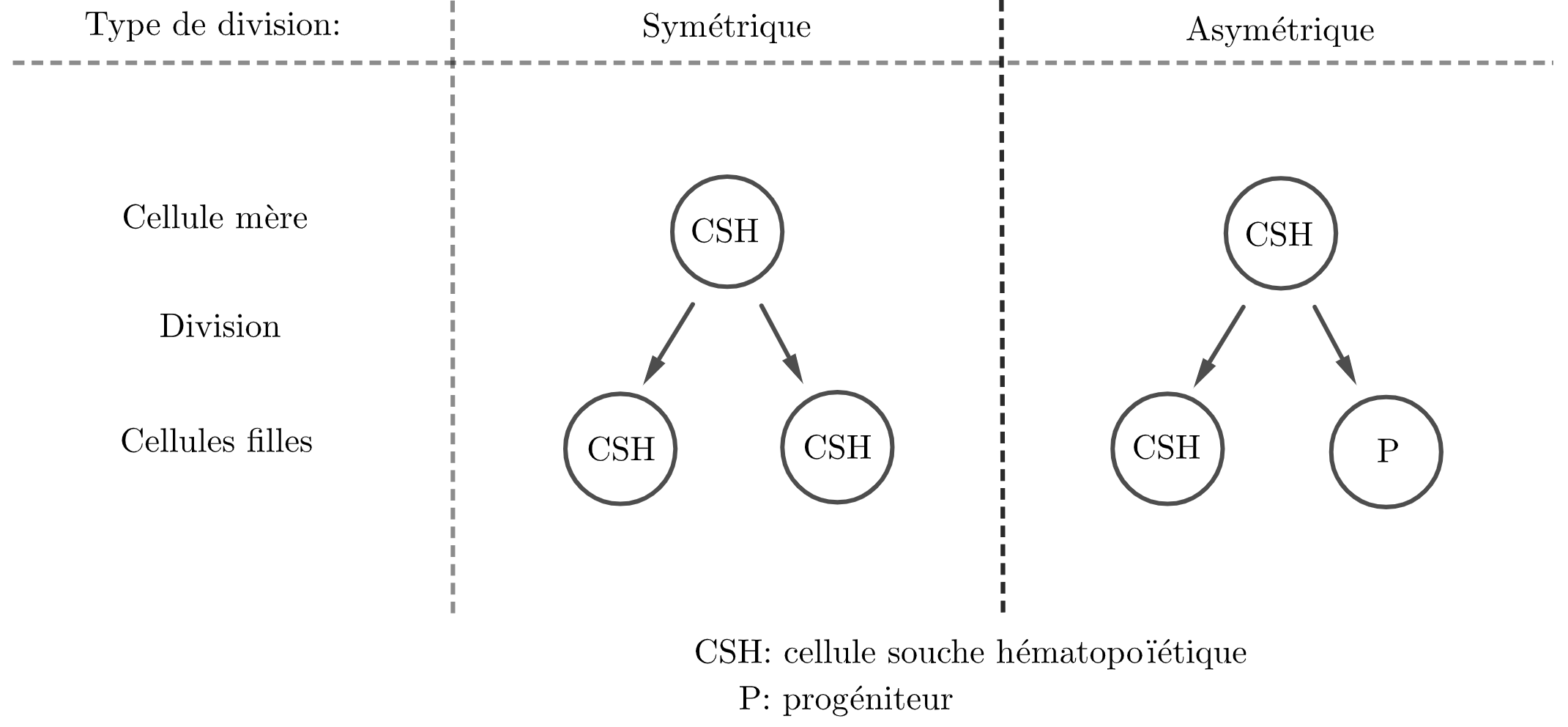}
				\caption{Different CSH division types} \label{celldivSyAsy}
			\end{figure}

			The study of HSCs remains challenging due to their low abundance and the experimental constraints related to in vitro culture. On top of hematopoiesis hierarchy, HSCs are multipotent and can differentiate into either a common lymphoid progenitor or a common myeloid progenitor (see Figure~\eqref{schemahematopogeogebra}), which subsequently commit to specific lineages producing lymphocytes (B, T, NK, and certain dendritic cells) and the precursors of erythrocytes, platelets, and various leukocytes, respectively \cite{kindt2007kuby}. This differentiation process is regulated by extracellular signals, particularly growth factors, and dendritic cells (which play a pivotal role in the innate detection of pathogens and the subsequent activation of the adaptive immune response) may arise from both lineages.
			
			\begin{figure}[htbp]
				\centering
				\includegraphics[width=1\textwidth]{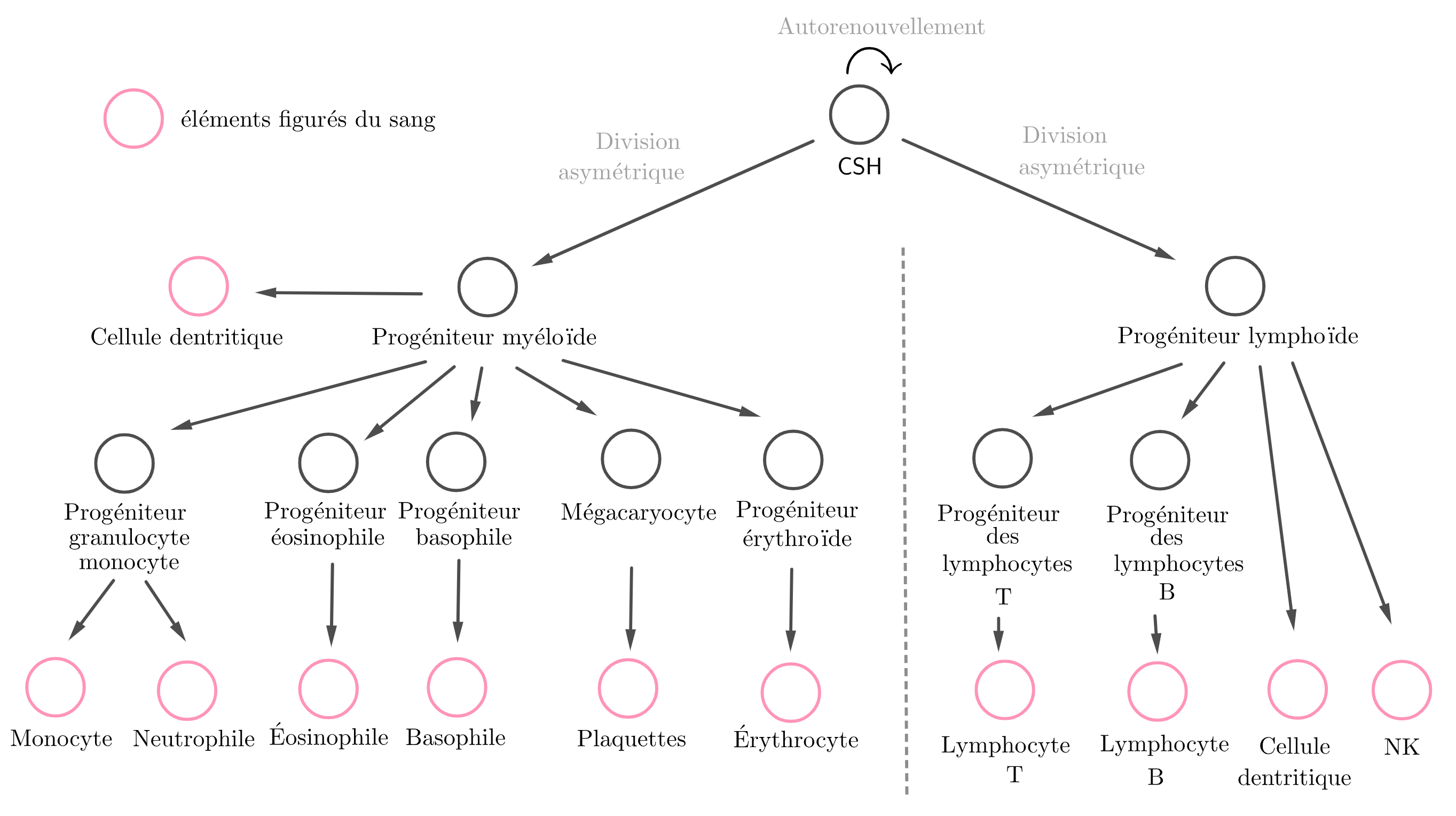}
				\caption{Hematopoiesis hierarchy (adapted from~\cite{kindt2007kuby}, Chapter~2)}
				\label{schemahematopogeogebra}
			\end{figure}

			The self-renewal capacity of cell populations other than hematopoietic stem cells remains debated and lacks consensus. Some studies suggest that progenitor cells retain self-renewal ability~\cite{adimy2008modelisation}, whereas others highlight heterogeneity within the progenitor compartment, including long-term and short-term stem cells as well as non-self-renewing multipotent progenitors~\cite{cheshier1999vivo}. Additionally, certain models assume that both immature and mature cells can self-renew~\cite{adimy2008discrete}, while other works associate this property with stem cell quiescence~\cite{laurenti2018haematopoietic}. In this context, the precise definition of the progenitor compartment is crucial. In our model, we adopt the assumption that self-renewal capacity decreases during differentiation and progressively vanishes as cells commit to specialized lineages~\cite{orkin2008hematopoiesis}.
			
			\subsection{Hematopoïesis, telomere and telomerase}

			We focus here on the dynamics of telomere length during hematopoiesis, based on~\cite{prade2007regulation}. At each cell division, telomeres undergo progressive shortening, which is partially compensated by telomerase activity that adds nucleotide repeats to chromosome ends, thereby limiting attrition and contributing to the preservation of genomic integrity (see Section~\ref{soussectelomerase}).

			The activation of telomerase during hematopoiesis, similarly to self-renewal capacity, remains a matter of debate. Some studies suggest high telomerase activity in progenitor cells due to their rapid proliferation~\cite{prade2007regulation}, whereas others report low activity in these same populations~\cite{zimmermann2004effects}. Moreover, the presence of telomerase in hematopoietic stem cells remains unclear and appears to be generally limited in HSCs, progenitors, and lymphocytes~\cite{greenwood2003telomeres}. In any case, its activity does not seem sufficient to fully prevent telomere attrition, the precise regulatory mechanisms of which during hematopoiesis remain poorly understood. It is generally accepted that telomerase primarily mitigates this shortening by contributing to cell proliferation, protecting chromosomal ends from DNA damage, and preventing cellular senescence, rather than completely inhibiting telomere shortening~\cite{prade2007regulation}. 
			In our model, described in the next section, we assume that telomerase activates in HSCs only.

			\section{A multi-type branching process for the modelisation of hematopoiesis and associated telomere dynamics}

			In this section, we propose a branching process to model the evolution of a population of cells and their telomere lengths during  hematopoiesis, involving several factors such as self-renewal, specialization, telomere attrition, telomerase, differentiation, and death. As noted in the previous section, our biological assumption is that telomerase is active only in HSCs.

			We consider a model with three types of cells, distributed across L compartments
			\begin{itemize}
				\item \textit{HSCs}, which are contained in compartment 1,
				\item \textit{progenitor cells}, which are contained in the $L-2$ compartments numbered from 2 to $L-1$,
				\item \textit{lymphocytes}, which are contained in compartment $L$.
			\end{itemize}

			\begin{figure}[htbp]
				\centering
				\includegraphics[width=0.8\textwidth]{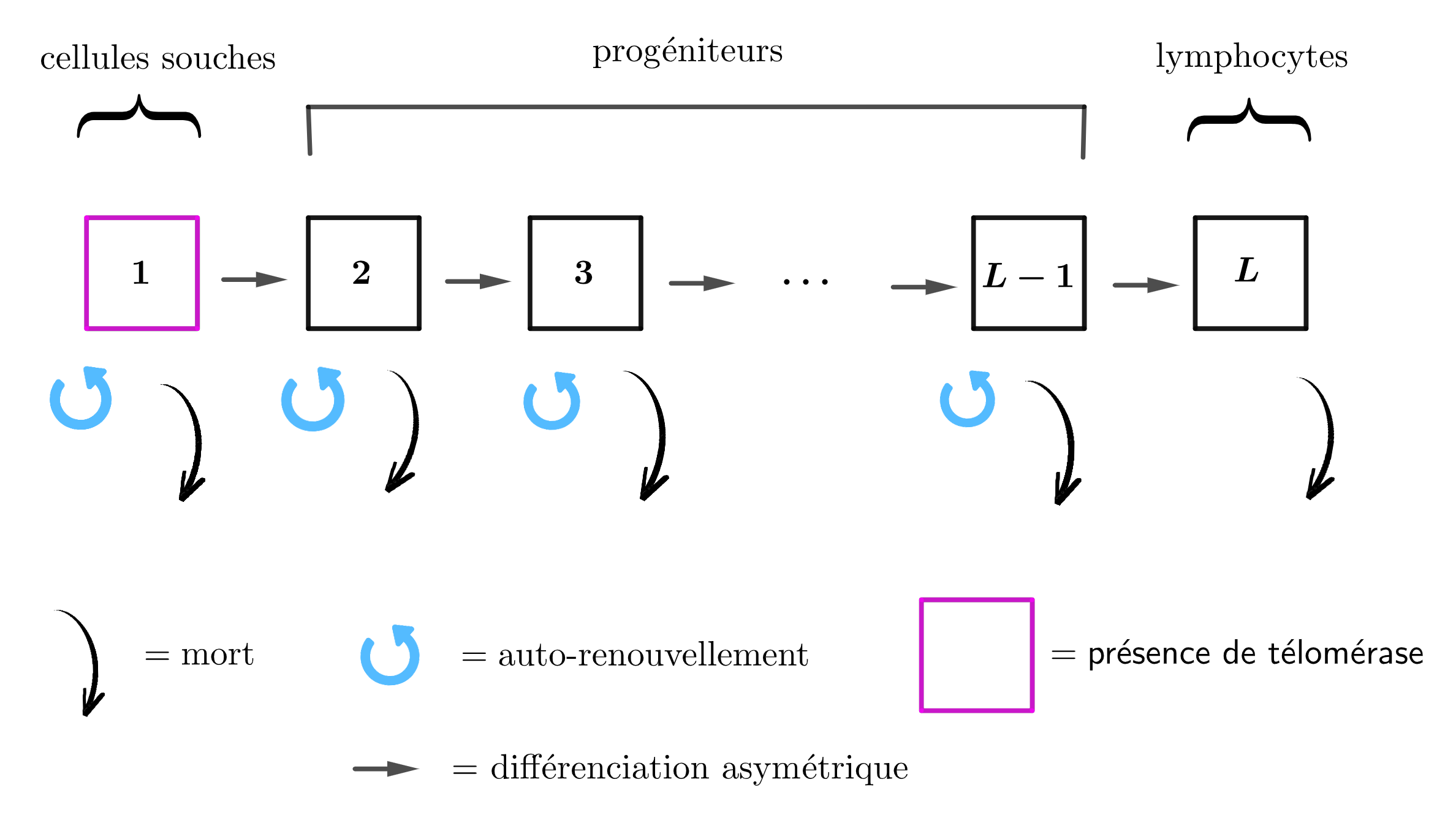}
				\caption{Schematic representation of one lineage in hematopoiesis}
			\end{figure}

			As explained in subsection~\ref{soussectelomerase}, the $3'$ end of telomeres is unpaired and extends beyond the $5'$ end by a length $\omega$ (see Figure~\ref{figure11}), which does not vary from one telomere to another. In particular, the length of a telomere is entirely characterized by the length of its $3'$ strand. The \textit{configuration} of a chromosome will hereafter be denoted by a pair $(m,n)$, where $m$ and $n$ are the lengths of the two $3'$ ends of that chromosome 
			($m$ denotes the length of the $3'$ end of the first strand, and $n$ denotes the length of the $3'$ end of the second strand). We let $\ell_{\text{max}}\in\mathbb N$ denote the maximum length of a telomere, so that the length of a telomere is an element of $\left\{ 0, \cdots, \ell_{\text{max}} \right\} $, and the configuration of a chromosome is an element of $\left\{ 0, \cdots, \ell_{\text{max}} \right\}\times\left\{ 0, \cdots, \ell_{\text{max}} \right\}$.
			Let $n_c\in\mathbb N$ denote the number of chromosomes in a cell, the configuration of a cell, given by the configurations of each of these chromosomes, is therefore an element of the set \[
			E_0:=\left(\left\{ 0, \cdots, \ell_{\text{max}} \right\}\times \left\{ 0, \cdots, \ell_{\text{max}} \right\}\right)^{n_c}.
			\]

				We choose a fixed quantity $\ell_{\text{min}}$, which denotes the minimum length a telomere can have: when the minimum length of the $3'$ ends of the telomeres within a cell falls below this value $\ell_{\text{min}} $, we will assume that the cell becomes senescent, that is, it will no longer be able to divide, although it may remain metabolically active in the body~\cite{CampisiJudith2007Cswb, diFagagnaFabriziodAdda2008Loab}

			Given a configuration $i=(i_{c,1},i_{c,2})_{c\in\{1,\ldots,n_c\}}\in E_0$, we define 
			\[ 
			\min i:=\min \{i_{1,1},i_{1,2}\ldots,i_{n_c,1}, i_{n_c,2}\},
			\]
			such that a cell with configuration $i$ is senescent if $\min i < \ell_{\text{min}}$, in which case it does not appear in the population count. Thus, the telomere length configuration of a non-senescent cell present in the population is an element of 
			\[ 
			E:=\left(\left\{ \ell_{\text{min}}, \cdots, \ell_{\text{max}} \right\}\times \left\{ \ell_{\text{min}}, \cdots, \ell_{\text{max}} \right\}\right)^{n_c}. 
			\]
			Since there are $L$ possible compartments, the type of a cell is an element of
			\[
			\mathbf X:=\{1,\ldots,L\}\times E,
			\]
			and, for $x=(k,i)\in\mathbf X$, we write $\min x:=\min i$.

				For $x=(k,i)\in \mathbf X$, we denote by $N_t^x$ the number of cells with type $x$, that is with  telomere length configuration $i \in E$ in compartment $k \in \llbracket 1; L \rrbracket$, at time $t \geqslant 0$. The overall configuration of the non-senescent cell population is described by 
				the empirical measure $\mu_t$ on $\mathbf X$ of the individual configurations of the cells in each compartment, defined as
				\begin{align}
					\label{eq:mut}
					\forall t \geqslant 0, \mu_t=\sum_{x\in\mathbf X} N_t^{x} \delta_{x}.%= \sum_{k=1}^L \sum_{i \in E}\mathds 1_{\min i\geqslant\ell_{\text{min}}} N_t^{k,i} \delta_{(k,i)}.
				\end{align} 
				The dynamics of the cell population are described by the following branching process, depending on the compartment of each cell. Note any daughter cell $x\in\mathbf X$ such that $\min x < \ell_{\text{min}}$ enters a senescent state and is removed from the system.
				
				\medskip\noindent \textbf{Compartment 1.} Each cell in compartment $1$ divides at an exponential rate $\displaystyle \tau_1>0$, independently of the rest of the system.
				Upon division, a cell with configuration $i \in E$ produces two cells with configurations $C_{i,1}^{+}$ and $C_{i,2}^{+}$ that possess telomerase (the distribution of these configurations is described below). Furthermore, with probability $\alpha_{R,1}\in(0,1)$, both daughter cells remain in compartment $1$, and with probability  $\alpha_{D,1}:=1-\alpha_{R,1}$, one of the two daughter cells remains in compartment $1$, and the other moves to compartment $2$. Thus, the part of the evolution of $\mu_t$ associated with compartment $1$ can be described as follows: for all  $i\in E$,
				\[
				\mu_t \rightarrow \left\{\begin{matrix}
					\mu_t - \delta_{(1,i)}+\delta_{(1,C_{i,1}^+)}\mathds 1_{C_{i,1}^+\geqslant\ell_{\text{min}}}+\delta_{(1,C_{i,2}^+)}\mathds 1_{C_{i,2}^+\geqslant\ell_{\text{min}}} & \text{at rate }   \alpha_{R,1}\tau_1 N_t^{(1,i)} &\\
					\mu_t - \delta_{(1,i)}+\delta_{(1,C_{i,1}^+)}\mathds 1_{C_{i,1}^+\geqslant\ell_{\text{min}}}+\delta_{(2,C_{i,2}^{+})} \mathds 1_{C_{i,2}^+\geqslant\ell_{\text{min}}}& \text{at rate } \alpha_{D,1}\tau_1 N_t^{(1,i)} 
				\end{matrix}\right.
				\]
				\medskip \noindent \textbf{Compartment $ 2 \leq k \leq L-1$.} Each cell in compartment $k$ divides at an exponential rate $\displaystyle \tau_k>0$ (which may depend on the compartment), independently of the rest of the system.
				Upon division, a cell with configuration $i \in E$ gives rise to two cells with configurations $C_{i,1}^{-}$ and $C_{i,2}^{-}$ that lack telomerase (the distribution of these configurations is described below). 
				Furthermore, with probability $\alpha_{R,k}\in(0,1)$, both daughter cells remain in compartment $k$, and with probability  $\alpha_{D,k}:=1-\alpha_{R,k}$, one of the two daughter
				cells remains in compartment $k$, and the other moves to compartment $k+1$.
				
				Thus, we can describe the part of the evolution of $\mu_t$ related to compartment $k$ as
				follows: for all $i\in E$,
				\[
				\mu_t \rightarrow 
				\begin{cases}
					\mu_t - \delta_{(k,i)}+\delta_{(k,C_{i,1}^-)}\mathds 1_{C_{i,1}^-\geqslant\ell_{\text{min}}}+\delta_{(k,C_{i,2}^-)}\mathds 1_{C_{i,2}^-\geqslant\ell_{\text{min}}} & \text{at rate }   \alpha_{R,k}\tau_k N_t^{(k,i)} \\
					\mu_t - \delta_{(k,i)}+\delta_{(k,C_{i,1}^-)}\mathds 1_{C_{i,1}^-\geqslant\ell_{\text{min}}}+\delta_{(k+1,C_{i,2}^-)}\mathds 1_{C_{i,2}^-\geqslant\ell_{\text{min}}} & \text{at rate }   \alpha_{D,k}\tau_k N_t^{(k,i)}  \\
				\end{cases}
				\] 
				\medskip \noindent \textbf{Compartment $L$.} The cells in compartment $L$ do not divide, but die at a rate $d_L > 0$.
								Thus, we can describe the part of the evolution of $\mu_t$ associated with compartment $k$ as follows: for all $i \in E$,
				\[
				\mu_t \rightarrow 		
					\mu_t - \delta_{(L,i)} \text{ at rate }   d_L N_t^{L,i}.
				\]
				
				\medskip
				Let us now describe the distributions of the configurations $(C^-_{i,1}, C^-_{i,2})$ (resp. $(C^+_{i,1},C^+_{i,2})$) of the telomere lengths of the two daughter cells of a mother cell with configuration $i\in E$, in the absence of telomerase (resp. in the presence of telomerase). This dynamics is illustrated in Figure~\ref{figure11}, adapted from Figure~\ref{figure9}  and supplemented by a final step corresponding to telomerase.

				%htbp
				\begin{figure}
					\centering
					\includegraphics[width=0.8\textwidth]{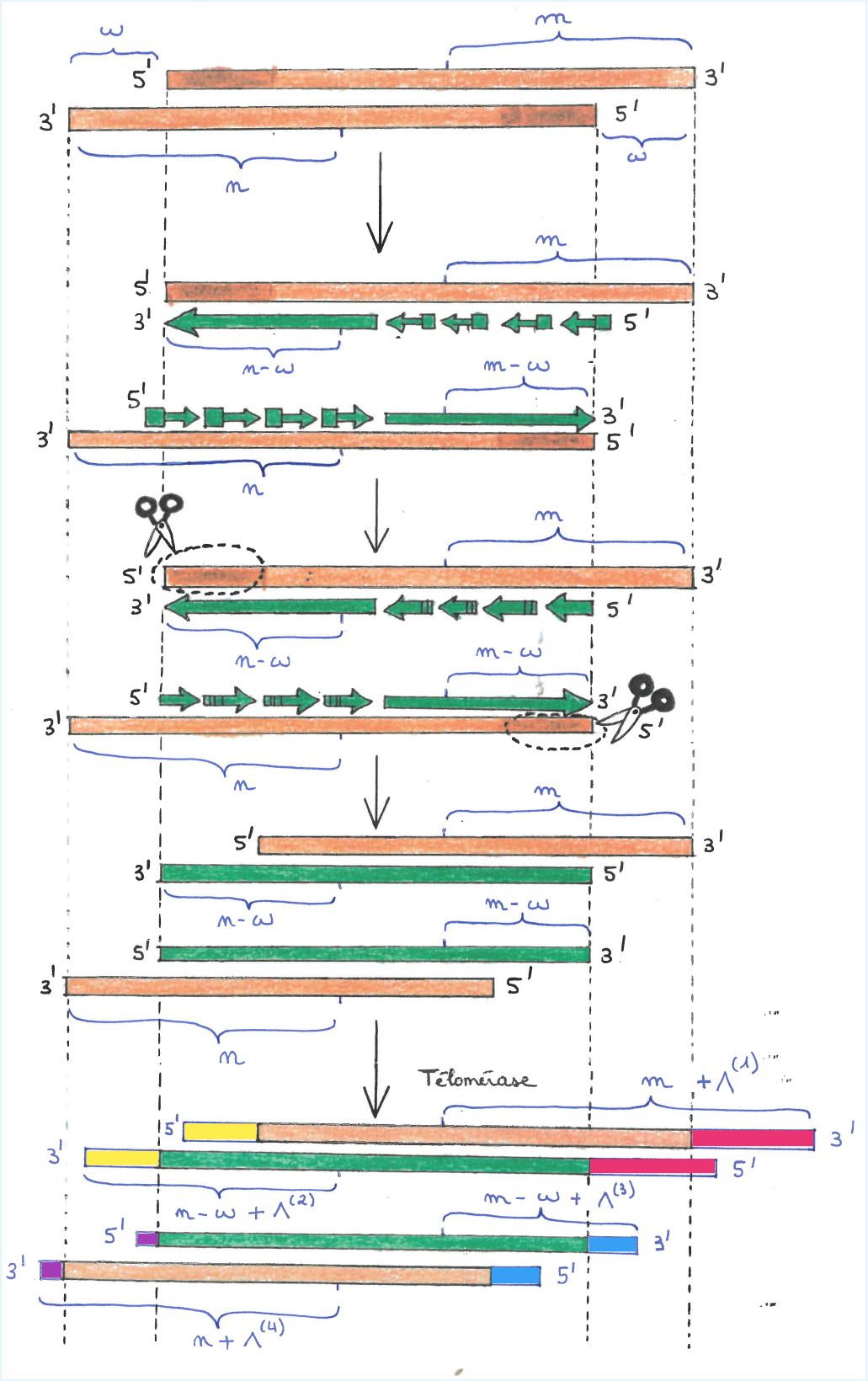}
					\caption{Length of telomeres during DNA replication before and after telomerase activity. Note that it may be the case that telomerase do not elongate all ends: for instance we may have $\Lambda^{(3)}=0$ in the above figure.
						} 
					\label{figure11}
				\end{figure}

				\medskip\noindent \textbf{Configurations $(C^-_{i,1},C^-_{i,2})$.}
				Configurations of this type correspond to the stage without telomerase.
				During DNA replication without telomerase, a chromosome with $3'$ telomere lengths $(m,n)$ is replicated into two chromosomes with respective $3'$ telomere lengths $(m,n-\omega)$ and $(m-\omega,n)$, where $\omega$ is the length of the overhang. The two chromosomes thus obtained after replication are distributed between the two daughter cells with uniform randomness, independently of the other chromosomes.

				Thus, formally, the two configurations $C^-_{i,1}$ and $C^-_{i,2}$ of the two daughter cells obtained from a parent configuration $i=(i_{c,1},i_{c,2})_{c\in \{1,\ldots,n_c\}}$ are given by
				\begin{align*}
					C^-_{i,1}&=\left(i_{c,1}-\omega B_c,i_{c,2}-\omega (1-B_c)\right)_{c\in \{1,\ldots,n_c\}}\\
					C^-_{i,2}&=\left(i_{c,1}-\omega (1-B_c),i_{c,2}-\omega B_c\right)_{c\in \{1,\ldots,n_c\}},
				\end{align*}
				where $B_c\in\{0,1\}$, $c\in\{1,\ldots,n_c\}$, are independent Bernoulli random variables with parameter $\displaystyle\frac{1}{2}$, and $c$ is the chromosome number in the cell.
				Thus, if $B_c=0$ (or $B_c=1$), a parent chromosome with configuration $i=(i_{c,1},i_{c,2})_{c\in \{1,\ldots,n_c\}}$ will produce two daughter chromosomes with configurations $(i_{c,1},i_{c,2} -\omega)_{c\in \{1,\ldots,n_c\}}$ for the first and $(i_{c,1}-\omega,i_{c,2})_{c\in \{1,\ldots,n_c\}}$ for the second (respectively, $(i_{c,1} -\omega,i_{c,2})_{c\in \{1,\ldots,n_c\}}$ for the first and $(i_{c,1},i_{c,2}-\omega)_{c\in \{1,\ldots,n_c\}}$ for the second).
				
				\medskip\noindent \textbf{Configurations $(C^+_{i,1},C^+_{i,2})$.} Configurations of this type correspond to the telomerase step, illustrated in the last step of Figure~\ref{figure11}.
				During DNA replication with telomerase, a chromosome with $3'$ telomere lengths $(m,n)$ is replicated into two chromosomes, each with independently elongated $3'$ telomeres, with respective probabilities depending on their respective lengths $m$ and $n$. The two chromosomes thus obtained after replication are distributed between the two daughter cells with uniform randomness, independently of the other chromosomes. Formally, assuming that a telomere of length $\ell$ is extended by the action of telomerase with probability $q(\ell)$, where $q:\mathbb N\to[0,1]$, by a random length following a distribution $\pi$ on $\mathbb N$, we obtain 
				\begin{align*}
					C^+_{i,1}&=\left(\ell_{\text{max}}\wedge (i_{c,1}-\omega B_c+\chi^{(1)}_c\Lambda^{(1)}_c),\ell_{\text{max}}\wedge(i_{c,2} -\omega (1-B_c)+\chi^{(2)}_c\Lambda^{(2)}_c)\right)_{c\in \{1,\ldots,n_c\}}\\
					C^+_{i,2}&=\left(\ell_{\text{max}}\wedge(i_{c,1} -\omega (1-B_c)+\chi^{(3)}_c\Lambda^{(3)}_c),\ell_{\text{max}}\wedge(i_{c,2}-\omega B_c+\chi^{(4)}_c\Lambda^{(4)}_c)\right)_{c\in \{1,\ldots,n_c\}},
				\end{align*}
				where $B_c\in\{0,1\}$, $c\in\{1,\ldots,n_c\}$, are independent random variables following a Bernoulli distribution with parameter $\displaystyle \frac{1}{2}$, $\chi^{(1)}_c$ follows a Bernoulli distribution with parameter $q(i_{c,1})$, $\chi^{(2)}_c$ follows a Bernoulli distribution with parameter $q(i_{c,2})$, $\chi^{(3)}_c$ follows a Bernoulli distribution with parameter $q(i_{c,1})$, $\chi^{(4)}_c$ follows a Bernoulli distribution with parameter $q(i_{c,2})$, and $\Lambda^{(1)}$, $\Lambda^{(2)}$, $\Lambda^{(3)}$, $\Lambda^{(4)}$ all follow the same distribution $\pi$, and all these variables are independent of one another.

				\section{Large-Population Model}

				In this section, we examine the behavior of the model in a large population, that is, when the number of cells in the system is very large. In this situation, cellular events (division, specialization, senescence) are so numerous that the stochastic aspect is averaged out, and the probabilistic model can be approximated by a deterministic model. The objective of this section is to demonstrate this phenomenon, known as the law of large numbers, and to explicitly describe the limiting model in terms of the chosen biological parameters. In the following section, we will examine the error term—that is, the difference between the stochastic model and the limiting deterministic model.

				From a mathematical perspective, we introduce a scale parameter $K \in \mathbb N^*$ that measures the size of the initial cell population and weight each individual by $\displaystyle \frac{1}{K}$ in the representation~\eqref{eq:mut}. In the following theorem, we denote by $(\mu_t^K)_{t\geqslant0}$ the empirical measure of the configurations when the initial population size is of order $K$, so that $\displaystyle \mu_0^{K} \left( \mathbf X \right)\simeq K$. We consider the renormalized measure $\displaystyle \frac{\mu_t^{K}}{K}$ and describe its asymptotic behaviour for large $K$. 
				To describe the (deterministic) dynamics of the limit, we introduce the following cells division rates. Let 
				 $y=(\ell,j)\in\mathbf X$, we define respectively the cells replication ($R$), differentiation ($D$) and death ($M$) rates of a cell with type $y$ (that is in compartment $\ell$ and configuration~$j$):
				\begin{align*}
					\theta_R^{y}=\alpha_{R,\ell}\tau_\ell \mathds 1_{\ell \leq L-1},\  \theta_D^{y}=\alpha_{D,\ell}\tau_\ell \mathds 1_{\ell \leq L-1},\  \theta_M^{y}=d_L \mathds 1_{\ell=L}.
				\end{align*}
				and, for all $x=(k,i)\in\mathbb X$,%k\in\{1,\ldots,L\}$ and $i\in E$,
				\begin{align*}
					&B_R^{x,y}= \left(\mathds 1_{k=\ell}\mathbb P(C_{j,1}^\pm=i)+\mathds 1_{k=\ell}\mathbb P(C_{j,2}^\pm=i)-\mathds {\mathds 1_{k=\ell}}1_{i=j}\right)\mathds 1_{\ell\leq L-1},\\
					&B_D^{x,y}=\left(\mathds 1_{k=\ell}\mathbb P(C_{j,1}^\pm=i)+\mathds 1_{k=\ell+1}\mathbb P(C_{j,2}^\pm=i)-\mathds {\mathds 1_{k=\ell}}1_{i=j}\right)\mathds 1_{\ell\leq L-1}\\
					&B_M^{x,y}=-\mathds 1_{k=\ell=L,i=j},
				\end{align*}
				where $\pm=+$ if $\ell=1$, and $\pm=-$ if $\ell\geqslant2$.

			These coefficients indicate the mean evolution of the number of cells of type $x$ when a cell of type of $y$ undergoes an event $R$, $D$ or $M$.

	Then we define the matrix 
	\begin{align}
	\label{definitiondeAronde}
	A:= \left(\sum_{e\in\{R,D,M\}}
		B_{e}^{x,y}  \theta_e^{y} \, 
		\right) _{x,y \in \mathbf X}.
\end{align}

In the following result, we denote by $\mathbb D([0,T])$ the space of càdlàg functions taking values in $\mathbb R^{\mathbf X}$ and equipped with the Skorokhod topology.

					\begin{theorem}
						\label{thm:LGN}
						Assume that $\left(\mathbb E\left(\frac1K\mu_0^K(\mathbf X)\right)\right)_{K\geq 1}$ is bounded and that there exists a finite measure $\nu_0$ on $\mathbf X$ such that
						\[
						\frac{\mu_0^{K}}{K} \xrightarrow[K \to +\infty]{\text{in law}} \nu_0,
						\]
						Then
						\[
						\left(\frac{\mu_t^{K}}{K}\right)_{t\in[0,T]} \xrightarrow[K \to +\infty]{\text{in law on } \mathbb D([0,T])}(\nu_t)_{t\in[0,T]},
						\]
						where $\nu$ is the unique solution on $\mathbb R_+^{\mathbf X}$ to the differential equation $\displaystyle \frac{\mathrm{d}\nu_t}{\mathrm{d}t}= A\nu_t$ with initial condition $\nu_0$.
						%{\color{blue} where $\nu$ is a probability measure on \textbf{$X$}.}
				\end{theorem}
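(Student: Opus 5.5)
We follow the classical semimartingale approach for density-dependent jump processes on a finite type space (Ethier--Kurtz, Fournier--Méléard). Since $\mathbf X$ is finite, $\mu^K_t/K$ is identified with a vector $Z^K_t:=(N^{x,K}_t/K)_{x\in\mathbf X}\in\mathbb R_+^{\mathbf X}$. For each $x\in\mathbf X$ we write the Dynkin/semimartingale decomposition
\[
Z^{K,x}_t=Z^{K,x}_0+\int_0^t (A Z^K_s)_x\,\mathrm ds+M^{K,x}_t .
\]
To obtain it, we compute the generator of the pure-jump Markov process on the linear functions $z\mapsto z_x$. A cell of type $y$ undergoes event $e\in\{R,D,M\}$ at rate $\theta^y_e N^y$, and the expected increment of $N^x$ is exactly $B^{x,y}_e$. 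The senescence indicators are harmless here: removed daughters are simply not counted, which is consistent with defining $B$ through $\mathbb P(C^\pm_{j,\cdot}=i)$ for $i\in E$. Hence the drift is $\sum_y\sum_e B^{x,y}_e\theta^y_e Z^y=(AZ)_x$. The predictable quadratic variation is
\[
\langle M^{K,x}\rangle_t=\frac1K\int_0^t \sum_{y,e}\theta^y_e\, \mathbb E\bigl[(\Delta^{x,y}_e)^2\bigr]\, Z^{K,y}_s\,\mathrm ds,
\]
where the jump sizes $\Delta^{x,y}_e$ of $N^x$ are bounded by $2$.

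Next we prove moment bounds. Each event changes the total mass by at most $+1$, at total rate at most $\bar\tau K\|Z^K\|_1$ with $\bar\tau=\max_k\tau_k$. Gronwall's lemma applied to $\mathbb E\|Z^K_t\|_1$, with a localization by stopping times at level $n$ followed by Fatou, gives $\sup_K\mathbb E\sup_{t\le T}\|Z^K_t\|_1\le C_T$ from the hypothesis that $\mathbb E(\mu^K_0(\mathbf X)/K)$ is bounded. The total mass is dominated by a Yule-type process, so the supremum bound follows from the martingale decomposition together with Doob's inequality: the total mass is a submartingale up to the drift. Consequently $\mathbb E\langle M^{K,x}\rangle_T\le C_T/K$, and Doob's $L^2$ inequality gives $\mathbb E\sup_{t\le T}|M^{K,x}_t|^2\to0$.

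For tightness, we apply Aldous' criterion in $\mathbb D([0,T])$. Compact containment follows from the first-moment bound and Markov's inequality, since $\mathbf X$ is finite. For the modulus condition, take stopping times $\sigma\le\sigma+\delta$: the drift increment is bounded by $\|A\|\,\delta\sup_{t\le T}\|Z^K_t\|$, and the martingale increment is handled by the previous step. Alternatively, and more simply, we observe that $Z^K-Z^K_0-\int_0^\cdot AZ^K$ tends to $0$ uniformly in probability. Then $Z^K$ is close to $\Phi(Z^K_0,\cdot)$, where $\Phi(z,t)=e^{tA}z$ is the flow of the linear ODE. Indeed, Gronwall's lemma applied to $\sup_{s\le t}\|Z^K_s-e^{sA}Z^K_0\|$ gives the bound $e^{\|A\|T}\sup_{t\le T}\|M^K_t\|$. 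This is the route I would actually take, since it avoids limit identification entirely and tightness comes for free: the map $z\mapsto (e^{tA}z)_{t\le T}$ is continuous from $\mathbb R^{\mathbf X}$ into $C([0,T])$. So $Z^K_0\to\nu_0$ in law gives $e^{\cdot A}Z^K_0\to e^{\cdot A}\nu_0$ in law, and by Slutsky's lemma the process $Z^K$ converges in law in the uniform topology, hence in Skorokhod, to $\nu_t=e^{tA}\nu_0$.

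Finally, $\nu_t=e^{tA}\nu_0$ is the unique solution of the linear ODE. It lies in $\mathbb R_+^{\mathbf X}$ since $A$ has nonnegative off-diagonal entries, all negative terms being diagonal ($-\mathds 1_{i=j}$). The main obstacle is the uniform-in-time moment control, since the population grows and the hypothesis is only on first moments. We need $\mathbb E\sup_t\|Z^K_t\|$ bounded rather than second moments; this is why the quadratic variation is kept linear in $Z^K$, so that only the first moment enters the bound on $\mathbb E\sup|M^K|^2$, combined with localization to justify the martingale property.
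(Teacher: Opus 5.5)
Your argument is correct, but it follows a different route from the paper's.

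The paper uses the standard compactness--uniqueness scheme. It bounds $\mathbb E\sup_{t\le T}\mu^K_t(\mathbf X)$ and controls the increments of the drift and martingale parts between stopping times, which gives tightness in $\mathbb D([0,T])$ via Aldous' criterion. It then shows that any limit point is a.s.\ continuous and satisfies $x_t=x_0+\int_0^tAx_s\,\mathrm ds$, and concludes by uniqueness for the ODE.

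Your preferred route instead exploits the linearity of the drift. Setting $D^K_t=Z^K_t-e^{tA}Z^K_0$, you get $D^K_t=\int_0^tAD^K_s\,\mathrm ds+M^K_t$. Gronwall then gives $\sup_{t\le T}\|D^K_t\|\le e^{C_AT}\sup_{t\le T}\|M^K_t\|$, and Doob's inequality together with $\mathbb E\langle M^{K,x}\rangle_T\le C_T/K$ makes this $O_{\mathbb P}(K^{-1/2})$. The continuity of $z\mapsto(e^{tA}z)_{t\le T}$ and a Slutsky-type argument finish the proof.

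What your route buys:
\begin{itemize}
\item It bypasses Aldous' criterion, Prokhorov extraction, the continuity of limit points and the identification step entirely.
\item It only needs $\sup_{t\le T}\mathbb E\|Z^K_t\|$, so your rather loose discussion of $\mathbb E\sup_t\|Z^K_t\|$ (the Yule domination and ``submartingale up to the drift'') is not actually required.
\item It gives a quantitative statement, $\mathbb E\sup_{t\le T}\|Z^K_t-e^{tA}Z^K_0\|^2=O(1/K)$, and hence convergence in probability whenever $Z^K_0\to\nu_0$ in probability.
\end{itemize}

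One small technical point: do not speak of convergence in law for the uniform topology on $\mathbb D$, which is non-separable and raises measurability issues. Instead, apply the converging-together lemma directly in $(\mathbb D([0,T]),d_{J_1})$, using two facts: the Skorokhod distance is dominated by the uniform distance, and the inclusion $C([0,T])\hookrightarrow\mathbb D([0,T])$ is continuous.

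What the paper's approach buys: it is heavier, but it does not rely on a pathwise Gronwall comparison with the limiting flow, so it carries over to settings where only uniqueness of the limit equation is available. It is also the template the paper reuses for the fluctuation theorem, where tightness plus a martingale problem identifies the Gaussian limit.
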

				\begin{proof}

				The proof is, as is standard, based on a compactness-uniqueness argument in  $\mathbb D([0,T])$. In a first step, we decompose the process as a semi-martingale. In a second step, we prove first moment estimates for $L^1$ norm of $\mu^K_t$ and obtain the tightness result. In a third step, we identify the unique limit point as the unique solution to an ODE.

					\medskip\noindent

				\textit{Step~1. Semimartingale decomposition.} 
					In what follows, we write encode the random variables $C_{j,1}^\pm$ as functions of a uniformly random variable on $[0,1]$, which we abusively denote by $C_{j,1}^\pm(\zeta)$, $\zeta\in[0,1]$.
					In addition, for any event $e \in \lrac{R, D, M}$ and types $x,y\in\mathbf X$, we define $\beta_e^{x,y}$ as a random variable representing the evolution of the number of cells of type $x$ following an event of type $e$ occurring to a cell of type $y$. More precisely, we set for all $x=(k,i)\in\mathbf X$ and $y=(\ell,j)\in\mathbf X$ (all unspecified coefficients are equal to zero)
					\begin{align}
						&\beta_R^{x,y}(\zeta)= \left(\mathds 1_{k=\ell,\, C_{j,1}^\pm(\zeta)=i}+\mathds 1_{k=\ell,\,C_{j,2}^\pm(\zeta)=i}-{\mathds 1_{(k,i)=(\ell,j)}}\right)\mathds 1_{\ell\leq L-1},\notag\\
						&\beta_D^{x,y}(\zeta)=\left(\mathds 1_{k=\ell,\,C_{j,1}^\pm(\zeta)=i}+\mathds 1_{k=\ell+1,\, C_{j,2}^\pm(\zeta)=i}-{\mathds 1_{(k,i)=(\ell,j)}}\right)\mathds 1_{\ell\leq L-1}\label{eq:defbetas}\\
						&\beta_M^{(k,i),(\ell,j)}(\zeta)=-\mathds 1_{k=\ell=L,i=j},\notag
					\end{align}
					where $\pm = +$ if $\ell = 1$, and $\pm = -$ if $\ell \geqslant 2$. Note that 
					\[
					B_{e}^{x,y}=\int_0^1 \beta_{e}^{x,y}(\zeta)\,d\zeta = \mathbb{E}\left( \beta_{e}^{x,y} \right).
					\]

					As a consequence, we have, for all $1 \leq k \leq L$, we have
						\begin{multline}
							\label{eq:star21}
							\mu_t^K(x)=\mu_0^{K}(x)+\sum_{y\in \mathbf X}\sum_{e\in\{R,D,M\}}  \int_{[0,t]\times\mathbb{R}^+\times[0,1]} \beta_{e}^{x,y}(\zeta) \\
							\times \mathds{1}_{0 \leq u \leq \theta_e^{y}\mu^K_s(y)} \,\mathcal{N}_e^{y}\left(ds, du ,d\zeta \right),
						\end{multline}  where the $\mathcal{N}_e^{y}$, $y\in\mathbf X$, are independent Poisson point processes describing the random occurrence times of self-renewal (R), differentiation (D), and death (M) events for cells of type $y$.                 
						Consequently, we have
						\begin{align*}
							\mu^K_t=\mu^K_0+\sum_{y\in \mathbf X}\sum_{e\in\{R,D,M\}}  \int_0^t   B_{e}^{\cdot,y} \theta_e^{y}\mu^K_s(y) \, ds+\sum_{y\in\mathbf X}\sum_{e\in\{R,D,M\}} M_t^{K,e,y}
						\end{align*}
%						\begin{align*}
%								N_t^K=N_0^K+A_t^K+M_t^K
%						\end{align*} 
						where the $M^{K,e,y}=(M^{K,e,y,x})_{x\in\mathbf X}$, $e\in \{R,D,M\}, y\in\mathbf X$, are orthogonal $|\mathbf X|$-dimensional martingales such that
						\begin{align}
							\label{eq:bracket}
							\left\langle  \sum_{x\in\mathbf X} M^{K,e,y,x}\right\rangle_t= \int_0^t  C_{e}^{y}  \theta_e^{y}\mu^K_s(y) \, ds,
						\end{align}
						where
						\[
						C_{e}^{y}=\mathbb{E}\left[ \left( \sum_{x\in\mathbf X} \beta_{e}^{x,y} \right)^2 \right]\leq 4 .
						\]
						We therefore have
						\begin{align}
							\label{eq:star22}
							\mu_t^K=\mu_0^K+A_t^{K}+M_t^{K}
						\end{align} where
						\begin{align*}
							A_t^{K}=\int_0^t A \mu_s^K \,  \mathrm{d}s\quad \text{ and }\quad M_t^{K}= \sum_{y\in\mathbf X}\sum_{e\in\{R,D,M\}} M_t^{K,e,y},
						\end{align*}
						where $A$ is defined in~(\ref{definitiondeAronde}).

					\medskip\noindent\textit{Step~2. First moment estimates and tightness.} The aim of this step is to derive bounds on the first moments of $\mu^K$, $A^{K}$, and $M^{K}$, and then to prove the tightness in $D([0,T])$ of the family $\displaystyle \lr{\frac1K\mu^K_t}_{t\in[0,T]}$, $K\geqslant1$.
						We first show that
						\begin{align}
							\label{eq:Step2eq1}
							\mathbb{E}\left(  \sup_{t \in \left[ 0,T \right]} \mu^K_t(\mathbf X)\right) \leq \mathbb{E}\left(\mu^K_0(\mathbf X)\right) e^{2C_\theta T},
						\end{align}
						where 
						$
						C_{\theta}:=\sup_{y\in\mathbf X} \sum_{e\in\{R,D,M\}}\theta_e^y.
						%\sup_n\frac{\|f_R(n)\|}{\|n\|}+\frac{\|f_D(n)\|}{\|n\|}+\frac{\|f_M(n)\|}{\|n\|}.
						$
						We then deduce that, for all $\delta>0$ and $S,S'$ such that $S \leq S' \leq (S+\delta)\wedge T$ and $\left| S-S' \right|\leq \delta$,
						\begin{align}
							\label{eq:Step2eq2}
							\mathbb{E} \left\| A_{S}^{K}-A_{S'}^{K} \right\|\leq  \delta C_{A}\mathbb{E}\left(\mu^K_0(\mathbf X)\right) e^{2C_\theta T},
						\end{align}
						$\|\cdot\|$ denotes the $L^1$ distance, and 
						where $ \displaystyle  C_{A}:=\sup_{y\in\mathbf X}\sum_{x\in\mathbf X} |A_{x,y}|$.
						We then prove that
						\begin{align}
							\label{eq:Step2eq3}
							\mathbb{E}\left\| \sum_{e,y} M_S^{K,e,y}- \sum_{e,y} M_{S'}^{K,e,y} \right\|\leq  2 \sqrt{C_\theta \delta   \mathbb{E}\left(\mu_0^K(\mathbf X)\right)} e^{C_\theta T}|\mathbf X|.
						\end{align}
						Once proven, these three properties, the equality~\eqref{eq:star22} and the fact that $\left(\mathbb E(\frac1K\mu_0^K(\mathbf X))\right)_{K\geq 1}$ is bounded, imply that 
						\[
						\sup_{K\geqslant1}\sup_{\substack{S,S' \in \tau \\ S \leq S' \leq T+\delta}}  \mathbb{E}\left\|\frac{\mu_S^K}{K}-\frac{\mu_{S'}^K}{K}\right\|\xrightarrow[\delta \to 0]{}0
						\]
						According to~\eqref{eq:Step2eq1}, the sequence of laws of $(\frac1K\mu_t^K)_{t\geq 0,K\geq 1}$ is tight,
						and consequently, according to the Aldous criterion (see e.g.~\cite[p.21]{bansaye2017stochasticmodelsstructuredpopulations})
						 that the family $\lr{\frac1K\mu^K_t}_{t\in[0,T]}$, $K\geqslant1$, is tight in $\mathbb D([0,T])$.
						
						Let us first prove~\eqref{eq:Step2eq1}.    
						Let $0 \leq t \leq T$. By~\eqref{eq:star21}, we have, for all $x\in \mathbf X$,
						\begin{align*}
							\mu_t^{K}(\mathbf X)&\leq \mu^K_0(\mathbf X)+\sum_{e,x,y}  \int_0^t \int_{\mathbb{R}^+} \int_0^1 \left| \beta_{e}^{x,y}(\zeta) \right| \mathds{1}_{0 \leq u \leq \theta_e^{y}\mu_s^K(y)} \mathcal{N}_e^{y}\left(ds, du ,d\zeta \right)\\
							&\leq  \mu^K_0(\mathbf X)+2\sum_{e,y}  \int_0^t \int_{\mathbb{R}^+} \int_0^1\mathds{1}_{0 \leq u \leq \theta_e^{y}\mu_s^K(y)} \mathcal{N}_e^{y}\left(ds, du ,d\zeta \right),
						\end{align*}
						where we used the  fact that $\displaystyle \lrva{\sum_{x}\beta_{e}^{x,y}(\zeta)}\leq2$.
						Consequently,
						\begin{align*}
							\mathbb{E}\left(  \sup_{t \in \left[ 0,T \right]}\mu^K_t(\mathbf X) \right)
							&\leq \mathbb{E}\left( \mu_0^{K}(\mathbf X)\right) + 2\sum_{e,y} \int_0^T 
							\theta_e^y\mathbb{E}\left(\mu^K_s(y))\right) \mathrm{d}s\\
							&\leq  \mathbb{E}\left( \mu_0^{K}(\mathbf X)\right) + 2C_\theta\int_0^T  \mathbb{E}\left(  \mu^K_s(\mathbf X)  \right) \mathrm{d}s,\\
							& \leq \mathbb{E}\left( \mu_0^{K}(\mathbf X)\right)+2C_\theta\int_0^T 
							\mathbb{E}\left( \sup_{t \in \left[ 0,s \right]}  \mu_t^{K}(\mathbf X) \right)
							\mathrm{d}s.
						\end{align*}
						This, together with Gronwall’s lemma, allows us to deduce \eqref{eq:Step2eq1}.
						To prove ~\eqref{eq:Step2eq2}, let us now observe that
						\begin{align*}
							\left\|A_{S'}^{K}-A_{S}^{K}\right\|&=\int_S^{S'}\left\| A\mu_u^K\right\| \mathrm{d}u
							\leq \int_S^{S'}C_A\mu_u^K(\mathbf X) \mathrm{d}u
							\leq{\underbrace{\left| S'-S \right|}_{\leq \delta}} C_A \sup_{u \leq T}\mu_u^K(\mathbf X).
						\end{align*}
						Therefore, taking expectations and using \eqref{eq:Step2eq1}, we obtain~\eqref{eq:Step2eq2}.

						Let us conclude this step by proving \eqref{eq:Step2eq3}. We deduce from~\eqref{eq:bracket} that, for all $x\in\mathbf X$,
						\begin{align}
							\sum_{e,y}  \left| \left<M^{K,e,y,x}\right>_{S'}-\left<M^{K,e,y,x}\right>_{S}  \right|
							&
							\leq 4 C_\theta \int_S^{S'} 
							\mu_s(\mathbf X) \, \mathrm{d}u \\
							&\leq  4 C_\theta \delta  \sup_{t \in \left[ 0, T \right]}  \mu_t^K(\mathbf X).
						\end{align}
						Taking the expected value and using~\eqref{eq:Step2eq1}, we therefore obtain
						\begin{align}
							\sum_{e,y} \mathbb E \left| \left<M^{K,e,y,x}\right>_{S'}-\left< M^{K,e,y,x}\right>_{S}  \right|&\leq  
							 4 C_\theta \delta   \mathbb{E}\left(\mu_0^K(\mathbf X)\right) e^{2C_\theta T},
							\label{cstetendverso}
						\end{align}
						which, using the orthogonality of the martingales involved, allows us to deduce \eqref{eq:Step2eq3} and conclude this step.

					\medskip\noindent\textit{Step 3. Identification of the limit.} Since the tightness of the family of the distributions of $(\frac1K\mu^K_t)_{t\in[0,T]}$, $K\geqslant1$, was established in the previous step, we now prove that every limit point is the solution to an equation that has a unique solution.

						Up to extraction and by Prokhorov’s theorem, there exists a random variable $X^{\infty}$ taking values in  $\mathbb{D}\left( \left[ 0,T \right] \right)$ such that $ \frac1K \mu^K$ converges in distribution to $X^{\infty}$. Let us first show that $X^{\infty}$ is continuous almost surely. To do this, we introduce the following function, which is continuous on $\mathbb D([0,T])$ and which vanishes if and only if the trajectory is continuous:
						\[
						\Delta :
						\begin{array}{cccc}
							\mathbb D & \to & \mathbb{R}_+ \\
							\left(x(t)\right)_{t \in \left[0,T  \right]} & \mapsto & \displaystyle \sup_{t \in \left[0,T  \right]} \left\| x(t)-x(t_-) \right\| \wedge  1 
						\end{array} 
						\] 
						Since the function $\Delta$ is continuous and bounded on $\mathbb D$, we obtain
						\begin{equation*}
							\mathbb E \left[ \Delta\left( \frac{\mu^K}{K} \right) \right]
							\underset{K\to +\infty}{\longrightarrow}\mathbb E \left[  \Delta \left(X^{\infty}\right)  \right]
						\end{equation*}
						From the definitions of $\Delta$ and of $\mu^K$, we have, for all $K\geqslant1$,  $\displaystyle\Delta\left( \frac{N^K}{K} \right) \leq \frac{2}{K}$, so
						$ \mathbb{E}\left( \Delta\left( X^{\infty} \right) \right)=0$. We conclude that $\Delta\left( X^{\infty} \right)=0$ almost surely, and consequently, $X^{\infty}$ is almost surely continuous.

						To determine the limit, we define, for all $t\in[0,T]$:
						\[
						\psi_t:
						\begin{array}{ccl}
							\mathbb D([0,T]) & \to & \mathbb{R}^{\mathbf X} \\
							\left(x(s)\right)_{s \in \left[0,T  \right]} & \mapsto &\displaystyle x(t)-x(0)- \int_0^t Ax(s)\mathrm{d}s  \\
						\end{array}.
						\]
						Since the random trajectory $X^\infty$ is almost surely continuous, it follows that  $\psi_t$ is almost surely continuous at $X^\infty$. Consequently, 						
						\begin{align*}
							\mathbb{E}\left( \left\|\psi_t\left( X^{\infty}\right)\right\|  \wedge 1  \right)&=\lim_{K\to\infty} \mathbb{E}\left(  \left\|\psi_t\left( \frac{\mu^K}{K}\right) \right\|  \wedge 1  \right)\\
							&= \lim_{K\to\infty}  \mathbb{E}\left( \left\| \frac1K M^{K}_t \right\|  \wedge 1  \right)
							=0
						\end{align*}
						according to ~\eqref{eq:Step2eq3} with $S=0, S'=T, \delta=T$.
						It follows that $\psi_t\left( X^{\infty}\right)=0$ almost surely, that is, almost surely, $X^\infty$ satisfies $X^\infty_0=\nu_0$ and is a solution to
						\[
						\frac{\mathrm{d}X_t^{\infty}}{\mathrm{d}t}= AX^\infty_t.
						\]
						Since this equation admits a unique solution, it is therefore the limit in distribution of $(X^K)_{K\geqslant1}$ in $\mathbb D([0,T])$.
				
			\end{proof}		
					
						\section{Fluctuations and  Central Limit Theorem}
						
						We focus in this section on the fluctuations of $\left(\frac{\mu_t^K}{K}\right)_{t\geq 0}$ around its limit $(\nu_t)_{t\geq 0}$, characterized in the previous section as the unique solution to the equation $\frac{\mathrm d\nu_t}{\mathrm dt}= A\nu_t$ with initial condition $\nu_0$. The following theorem shows that these fluctuation can be described by a Gaussian process. In order to define the limit, let, for all $t\geq 0$, $\sigma_t\in \mathbb R_+^{\mathbf X\times\mathbf X}$ be a matrix such that 
						\[
						\sigma_t^{\top}\sigma_t=\sum_{e,y} \theta_e^{y}\nu_t(y) \bar C_e^{y},
						\]
						where $\bar C_e^{y}\in \mathbb R_+^{\mathbf X\times\mathbf X}$ is the symmetric matrix defined by
						\[
						\left[\bar C_{e}^y\right]_{x,x'}:=\mathbb E\left(\beta_e^{x,y}\beta_e^{x',y}\right),
						\]
						with $\beta_e^{x,y}$ defined in~\eqref{eq:defbetas}.

	We recall that $\|\cdot\|$ denotes the $L^1$ norm.
	\begin{theorem}
		\label{thm:fluctuations}
		Take the assumptions of Theorem~\ref{thm:LGN}.
		For all $K\geq 1$ and $t\geq 0$, let 
		\[
		U_t^K:=\sqrt{K}\left[ \frac{\mu_t^K}{K} - \nu_t\right].
		\]
		We assume that $\left(\mathbb E\left(\left\|U_0^K\right\|^2\right)\right)_{K\geq 1}$ is bounded and that there exists a random variable $U_0^\infty$ such that
		\begin{align*}
			U^K_0\xrightarrow[K\to+\infty]{\text{loi}} U_0^\infty.%\text{ et }\sup_{K\geq 1} \mathbb E\left[\|U^K_0\|^2\right]<\infty.
		\end{align*}
		Then, for all $T>0$,
		\begin{align*}
			U^K& \xrightarrow[K \to +\infty]{\text{in law in }\mathbb D([0,T]) } U,
		\end{align*}
		 where 
		\[
		U_t=e^{tA}U_0+\int_0^t e^{(t-s)A}\sigma_s\,\mathrm dW_s,
		\]
		with $W$ a standard $|X|$-dimensional Brownian motion.
	\end{theorem}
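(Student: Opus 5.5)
We follow the same compactness–uniqueness scheme as in the proof of Theorem~\ref{thm:LGN}, now applied to $U^K$. The starting point is the decomposition~\eqref{eq:star22}. Divide it by $K$, subtract $\nu_t=\nu_0+\int_0^t A\nu_s\,\mathrm ds$, and multiply by $\sqrt K$. This gives the exact linear identity
\begin{align*}
U^K_t=U^K_0+\int_0^t AU^K_s\,\mathrm ds+\widetilde M^K_t,\qquad \widetilde M^K_t:=\frac{1}{\sqrt K}M^K_t .
\end{align*}
Because the drift is linear, no Taylor expansion is needed. Solving this equation by variation of constants (pathwise, via integration by parts) yields
\[
U^K_t=e^{tA}U^K_0+\widetilde M^K_t+\int_0^t Ae^{(t-s)A}\widetilde M^K_s\,\mathrm ds .
\]
The right-hand side is a continuous functional, for the Skorokhod topology at continuous limit points, of the pair $(U^K_0,\widetilde M^K)$. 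It therefore suffices to prove $\widetilde M^K\Rightarrow \widetilde M$ jointly with $U_0^K\Rightarrow U_0^\infty$. Here $\widetilde M$ is the continuous centered Gaussian martingale with $\langle \widetilde M\rangle_t=\int_0^t\sigma_s^\top\sigma_s\,\mathrm ds$, taken independent of $U_0^\infty$. The same formula applied to the limit gives exactly $U_t=e^{tA}U_0+\int_0^te^{(t-s)A}\sigma_s\,\mathrm dW_s$, by It\^o's integration by parts for $e^{(t-s)A}\widetilde M_s$.

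For the martingale part we compute the matrix-valued predictable bracket. The driving Poisson measures $\mathcal N^y_e$ are independent, so from~\eqref{eq:star21} we get
\[
\langle \widetilde M^{K,x},\widetilde M^{K,x'}\rangle_t=\int_0^t\sum_{e,y}\theta^y_e\,\frac{\mu^K_s(y)}{K}\,[\bar C^y_e]_{x,x'}\,\mathrm ds .
\]
By Theorem~\ref{thm:LGN}, $\mu^K/K\Rightarrow\nu$ in $\mathbb D([0,T])$ with a deterministic continuous limit, so the convergence is in probability, uniformly on $[0,T]$. Hence the bracket converges in probability to $\int_0^t\sigma_s^\top\sigma_s\,\mathrm ds$ for every $t$. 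The jumps of $\widetilde M^K$ are bounded by $2/\sqrt K$, since $|\beta|\le 2$ per event, so $\mathbb E\sup_{t\le T}\|\Delta \widetilde M^K_t\|\to0$. The martingale functional CLT (Ethier–Kurtz, Thm.~7.1.4, or Jacod–Shiryaev VIII.3.22) then gives $\widetilde M^K\Rightarrow\widetilde M$ in $\mathbb D([0,T])$.

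For the joint convergence with $U_0^K$, we condition on $\mathcal F_0$. Given $\mathcal F_0$, the martingale increments use fresh Poisson noise, and the limit bracket is deterministic. The conditional characteristic functional of $\widetilde M^K$ therefore converges to the Gaussian one, which yields asymptotic independence of $\widetilde M$ and $U_0^\infty$. Tightness of $U^K$ itself follows from the representation above, or directly from the Aldous criterion as in Step~2 of Theorem~\ref{thm:LGN}. For that route we use $\mathbb E\|U^K_0\|^2$ bounded, Gronwall applied to $\mathbb E\sup_{t\le T}\|U^K_t\|$, and Doob's inequality with the bracket bound $\mathbb E\langle\widetilde M^K\rangle_T\le 4C_\theta T\,\mathbb E(\mu_0^K(\mathbf X)/K)e^{2C_\theta T}$ obtained from~\eqref{eq:Step2eq1}.

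The main obstacle is the bracket convergence in probability together with the joint independence with the initial condition. The former needs the law of large numbers upgraded to uniform convergence in probability; this holds because the limit is deterministic and continuous, so Skorokhod convergence is equivalent to uniform convergence. A further point is that the quantity $\sigma_t$ in the statement should be interpreted as a square root of $\sum_{e,y}\theta^y_e\nu_t(y)\bar C^y_e$. This matrix is positive semidefinite, so the root exists and the stochastic integral is well defined; the limit law depends only on $\sigma^\top\sigma$.
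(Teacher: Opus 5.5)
Your proof is correct, and it takes a genuinely different route from the paper's.

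\textbf{The paper's route.} The paper uses the semimartingale decomposition of $U^K$ only to derive uniform first and second moment bounds on $\sup_{t\le T}\|U^K_t\|$, through an auxiliary decomposition involving $\nu^K_t=\mathbb E(\mu^K_t/K)$. From these bounds it gets tightness via the Aldous criterion. It then computes the generator of $U^K$ by a second-order Taylor expansion and shows it converges to $\mathscr L^\infty_t$. It deduces that every limit point solves the associated martingale problem, and concludes by uniqueness for that problem (Karatzas--Shreve).

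\textbf{Your route.} You use the linearity of the drift to get the exact identity $U^K_t=U^K_0+\int_0^tAU^K_s\,\mathrm ds+M^K_t/\sqrt K$. Variation of constants then writes $U^K$ as a map of $(U^K_0,M^K/\sqrt K)$ that is continuous at continuous paths. Everything reduces to a standard martingale FCLT, whose bracket condition is supplied by Theorem~\ref{thm:LGN}.

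\textbf{What each buys.} Your argument is shorter: it needs no Taylor expansion, no separate tightness estimates and no uniqueness theorem, and it produces the mild representation of $U$ directly. Your main route also never uses the bound on $\mathbb E\|U^K_0\|^2$. The paper's approach would survive nonlinear (density-dependent) rates, where no exact variation-of-constants formula exists. It also gets the independence of $U_0$ from the driving noise for free, because its martingale property is tested against $g_1(u_{s_1})\cdots g_k(u_{s_k})$ with $s_1=0$ allowed.

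\textbf{The independence step.} In your route this independence is the one step that needs an extra argument. Your conditioning sketch is right in substance, and it can be written cleanly as follows.
By the Markov property, $\mathbb E[h(\widetilde M^K)\mid\mathcal F_0]=\Phi_K(\mu^K_0)$, where $\Phi_K(m)$ is the same expectation started from $\mu^K_0=m$. Apply the FCLT to deterministic initial states $m_K$ with $m_K/K\to\nu_0$ and argue by contradiction. This shows that $\Phi_K(m)$ is close to $\mathbb E[h(\widetilde M)]$ uniformly over $m$ with $\|m/K-\nu_0\|$ small and $K$ large. Since $\mu^K_0/K\to\nu_0$ in probability, $\mathbb E[h(\widetilde M^K)\mid\mathcal F_0]\to\mathbb E[h(\widetilde M)]$ in probability, which yields joint convergence with an independent limit.

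\textbf{A small slip.} The law of $\int_0^te^{(t-s)A}\sigma_s\,\mathrm dW_s$ is governed by $\sigma_s\sigma_s^\top$, not $\sigma_s^\top\sigma_s$. This is harmless once $\sigma_s$ is taken to be the symmetric square root, as you propose.
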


	\begin{proof}
		
	The proof is decomposed into several steps. We prove uniform bounds on the first and second moments of $\sup_{u\in[0,t]}\left\|  U_u^K \right\|$ in Steps~1 and~2 respectively. In Step~3, we prove the tightness result. In Steps~4 and~5, we identify the infinitesimal generator associated to any limiting process and show that it is solution to the associated martingale problem. We conclude in Step~6 by identifying the unique limit point as the unique solution to the martingale problem of Step~5.

		\medskip\noindent \textit{Step 1. Bound on the first moment of $\sup_{u\in[0,t]}\left\|  U_u^K \right\|$.}
		We show that, for all $T>0$, there exists a constant $C_T$ such that
		\begin{align}
			\label{eq:step1U}
			\mathbb E\left(\sup_{u\in[0,t]}\left\|  U_u^K \right\| \right)\leq C_{T},\quad \forall t\in[0,T].
		\end{align}
		According to~\eqref{eq:star22}, we have, for all $t\in[0,T]$,
		\begin{align}
			\label{eq:decompU}
			U_t^{K}=
			U_0^K+\widetilde{A}_t^{K}+ \frac{M_t^{K}}{\sqrt{K}}+\Delta_t^{K}-\Delta_0^{K},
		\end{align}
		where $M^K$ is defined in the proof of Theorem~\ref{thm:LGN} and, setting $\nu^{K}_t:=\mathbb{E}(\frac1K \mu^K_t)$,
		\[
		\widetilde{A}_t^{K}=\sqrt{K} \left[ \frac{A_t^{K}}{K} -(\nu^{K}_t-\nu^{K}_0) \right]
		\text{ and }\Delta_t^K:=\sqrt{K}(\nu_t^{K}-\nu_t).
		\]
		We have
		\begin{align*}
			\widetilde{A}_t^{K}&=\sqrt{K} \left[\frac1K\int_0^{t} A \mu_u^K\,\mathrm{d}u -\int_0^{t} A\nu_u^K \mathrm{d}u \right]=\int_0^{t}  A U_u^K \, \mathrm{d}u,
		\end{align*}
		since $\nu^K$ is solution to  $\frac{\mathrm d\nu^K_t}{\mathrm dt}=A\nu^K_t$. 
		Hence, we obtain (recall that $C_A$ is defined in the proof of Theorem~\ref{thm:LGN})
		\begin{align}
			\label{eq:tildeA1}
			\|\widetilde{A}_t^{K}\|\leq C_{A} \, \int_0^{t} \|U_u^K\|\,\mathrm du.
		\end{align}
		
		We focus now on $M_t^K$. We have, using the same notations as in the proof of Theorem~\ref{thm:LGN},
		\[
		\left\|M_t^K\right\|=\sum_x \left|\sum_{e,y} M_t^{K,e,y,x}\right|,
		\]
		with, for any fixed $x$, the martingales $M^{K,e,y,x}$, $e\in\{R,D,M\}, y\in\mathbf X$, orthogonal to each others and such that
		\[
		\left\langle M^{K,e,y,x} \right\rangle_t=\int_0^t C_{e}^{x,y}\,\theta_e^y\mu_s^K(y)\,\mathrm ds,
		\]
		where
		\[
		C_e^{x,y}=\mathbb E\left[\left(\beta_e^{x,y}\right)^2\right]\leq 2.
		\]
		We thus have, for all $x\in\mathbf X$,
		\begin{align}
			\label{eq:usefulCompM}
		\left\langle \sum_{e,y} M^{K,e,y,x}\right\rangle_t &=  \sum_{e,y} \left\langle M^{K,e,y,x}\right\rangle_t\leq 2C_\theta\int_0^t \mu_s^K(\mathbf X)\,\mathrm ds,
		\end{align}
		and hence, using Doobs maximal inequality and~\eqref{eq:Step2eq3}, we get
		\begin{align*}
			\mathbb E\left(\sup_{t\in [0,T]} \left| \sum_{e,y} M_t^{K,e,y,x}\right|\right)\leq \sqrt{2C_\theta \int_0^T \mathbb E\left(\mu_s^K(\mathbf X)\right)\,\mathrm ds} \leq 2\, \sqrt{2C_\theta T \mathbb E(\mu_0^{K}(\mathbf X))}\,e^{C_\theta T}.
		\end{align*}
	Since $\frac{\mathbb{E}\left(\left\| \mu^K_0 \right\| \right)}{K}$ is uniformly bounded in $K\geq 1$ by assumption, we deduce from the fact that $\mathbf X$ is finite that there exists a constant $C_{T,M}$ such that
		\begin{align}
			\label{eq:borneMtsurK}
			\mathbb{E}\left(  \sup_{t \in \left[0,T  \right]} \left\| \frac{M_t^{K}}{\sqrt{K}}\right\| \right)\leq C_{T,M}.
		\end{align}
		
		In addition, since $\nu^K_t$ and $\nu_t$ are solution to the equation $\frac{\mathrm dx_t}{\mathrm dt}=Ax_t$, we deduce that
		\begin{align*}
			\|\Delta^{K}_t\|\leq \sqrt{K}\|\nu_0^K-\nu_0\|\,e^{C_{A} t}\leq \mathbb E\|U_0^K\|\,e^{C_{A} t},
		\end{align*}
		and hence, since  $\mathbb E\|U_0^K\|$ is assumed to be uniformly bounded in $K$, there exists $C_{T,\Delta}\geq 0$ such that
		\begin{align}
			\label{eq:UborneDelta}
			\sup_{t\in[0,T]}\|\Delta^{K}_t-\Delta^{K}_0\| \leq C_{T,\Delta}.
		\end{align}
		
		Using the equality~\eqref{eq:decompU} and the inequalities~\eqref{eq:tildeA1},~\eqref{eq:borneMtsurK} and~\eqref{eq:UborneDelta}, we deduce that, for all $t\in[0,T]$,
		\begin{multline*}
			\mathbb E\left(\sup_{u\in[0,t]}\left\|  U_u^K \right\|\right)\leq \mathbb E \left\|  U_0^K \right\|+C_{A}\int_0^t \mathbb E\left(\sup_{u \in \left[ 0,v \right]} \|U_u^K\| \right)  \mathrm dv
			+C_{T,M}+C_{T,\Delta}.
		\end{multline*}
		We deduce from Grönwall Lemma that~\eqref{eq:step1U} holds true.
		
	\medskip\noindent\textit{Step 2. Uniform bound on the second moment of  $\sup_{u\in[0,t]}\left\|  U_u^K \right\|$.}
		We show that there exists a constant $C'_{T}>0$ such that
		\begin{align}
			\label{eq:Ustep2}
			\mathbb E\left(\left(\sup_{u\in[0,t]}\left\|  U_u^K \right\|\right)^2\right)\leq C'_{T},\quad \forall t\in[0,T].
		\end{align}

		Starting with equality~\eqref{eq:decompU} and using the inequalities~\eqref{eq:tildeA1} and~\eqref{eq:UborneDelta}, we obtain, similarly as in the previous step that, for all $t\in[0,T]$,
		\begin{multline}
			\label{eq:Ustep2borne1}
			\sqrt{\mathbb E\left(\sup_{u\in[0,t]}\left\|  U_u^K \right\|^2\right)}\leq \sqrt{\mathbb E\left(\left\|  U_0^K \right\|^2\right)}
			+C_{A}\sqrt{\mathbb E\left(\left(\int_0^t \sup_{u \in \left[ 0,v \right]} \|U_u^K\|\,\mathrm dv \right)^2\right)}\\
			+\sqrt{\mathbb E\left(\left\| \frac{M^{K}_T}{\sqrt K}\right\|^2\right)}
			+ C_{T,\Delta}.
		\end{multline}
		In addition, using~\eqref{eq:usefulCompM} and~\eqref{eq:Step2eq1}, we deduce as in the previous step that 
		\begin{align*}
			\sqrt{\mathbb E\left(\left\| \frac{M^{K}_T}{\sqrt K}\right\|^2\right)}&\leq \sum_{x\in\mathbf X}\sqrt{\mathbb E\left(\left(\frac{\sum_{e,y} M^{K,e,y,x}_T}{\sqrt K}\right)^2\right)}\\
			& \leq |\mathbf X|\,2C_\theta T e^{2C_\theta T}  \frac1K \mathbb E(\mu^K_0(\mathbf X))\\
			&\leq C'_{T,M},
		\end{align*}
		where $C'_{T,M}$ is a constant which does not depend on $K$.

		We then deduce from  inequality~\eqref{eq:Ustep2borne1} that
		\begin{multline}
			\label{eq:Ustep2borne2}
			\sqrt{\mathbb E\left(\sup_{u\in[0,t]}\left\|  U_u^K \right\|^2\right)}\leq \sqrt{\mathbb E \left(\left\|  U_0^K \right\|^2\right)}
			+C_{A}\sqrt{\mathbb E\left(T\,\int_0^t \sup_{u \in \left[ 0,v \right]} \|U_u^K\|^2\,\mathrm dv \right)}\\
			+C'_{T,M}
			+ C_{T,\Delta}.
		\end{multline}
		Using Gronwall's Lemma, we deduce that~\eqref{eq:Ustep2} holds true.
	
		\medskip\noindent\textit{Step 3. Tightness of $(U^K)_K$.}
		
		Let $\delta>0$ and let $S,S'$ be stopping times such that $S \leq S' \leq (S+\delta)\wedge T$ almost surely. We have, similarly as in~\eqref{eq:tildeA1}, 
		\begin{align}
			\label{eq:UtildeA2}
			\mathbb E	\left\|\widetilde{A}_S^{K}-\widetilde{A}_{S'}^{K}\right\|&\leq C_{A} \mathbb E\left(\int_S^{S'} \|U^K_u\|\,\mathrm du\right)\leq  C_A\,\delta  \,   \mathbb E\left(\sup_{u \in \left[ 0,T \right]} \left\| U_u^K\right\|\right)
			\leq \delta C_A C_{T}.
		\end{align}
		In addition, similarly as in Step~1, we obtain
		\begin{align*}
			\mathbb{E} \left| \sum_{e,y} M^{K,e,y,x}_t- M^{K,e,y,x}_{S'}  \right| &\leq  \sqrt{2C_\theta \delta \mathbb E(\mu_0^{K}(\mathbf X))}\,e^{C_\theta T}.
		\end{align*}
		Hence there exists a constant $C_{T,M}''>0$ such that
		\begin{align*}
			\mathbb{E} \left(\left\| \frac1{\sqrt K} M^{K}_S- \frac1{\sqrt K} M^{K}_{S'}  \right\| \right)&\leq \sqrt{\delta}\, C''_{T,M}.
		\end{align*}
Finally, we also have
		\begin{align*}
			\frac1{\sqrt K}\left\| \Delta^{K}_S- \Delta^{K}_{S'}  \right\| &= \left\| (\nu_S^{K}-\nu_S)- (\nu_{S'}^{K}-\nu_{S'})\right\|
			\leq\left\|\nu^K_S-\nu_S\right\| \,(e^{C_{A}\delta}-1)\\
			&\leq  \left\| \Delta^{K}_S\right\| \,(e^{C_{A}\delta}-1)\leq C_{T,\Delta} \,(e^{C_{A}\delta}-1).
		\end{align*}
		Using the previous inequalities and the decomposition~\eqref{eq:decompU}, we deduce that
		\begin{align*}
			\mathbb{E}\left( \left\| U_{S}^K-U_{S'}^K \right\|\right)
			&\leq \delta C_A C_{T}+C''_{T,M}\sqrt{\delta}+C_{T,\Delta}\,(e^{C_{A}\delta}-1)
		\end{align*}
		and thus conclude that
		\begin{align*}
			\sup_{\substack{S,S' \in \tau,\ S \leq S' \leq T+\delta\\ K\geq 1} } \mathbb{E}\left( \left\| U_{S}^K-U_{S'}^K  \right\| \right) \underset{\delta \to 0}{\longrightarrow}0.
		\end{align*}
		In addition, according to Step~2, $\|U^K_s\|$, $K\geq1,s\in[0,t]$, is uniformly integrable and hence the sequence defined by $\left( U_s^K \right)_{s \in \left[ 0,T \right]}$, $K\geqslant1$, is tight in $\mathbb D([0,T])$ by Aldous tightness criterion (see e.g.~\cite[p.21]{bansaye2017stochasticmodelsstructuredpopulations})

		\medskip\noindent\textit{Step 4. Identification of the infinitesimal generators of the limit.}
		In what follows, we denote by $\mathscr L^K_t$ the infinitesimal generator of $U^K$ at time $t\geq 0$ and by $\mathscr H^K$ the infinitesimal generator of $\mu^K/K$ (which is time-homogeneous). For all  $f \in C_c^2\lr{\R^{\mathbf X},\R}$ (the set of twice continuously differentiable functions with compact support), we have
		\[
		f(U^K_t)=f\left(\sqrt K(\mu^K_t/K-\nu^K_t)\right)
		\]
		and hence, for all $t\geq 0$, $u\in \R^{\mathbf X}$ and setting $z=u/\sqrt{K}+\nu_t^K$,
		\begin{multline*}
			\mathscr{L}_t^Kf(u)=\mathscr{H}^Kf\lr{\sqrt{K}\lr{\cdot -\nu_t^K}}(z)\\
			+\left\langle \partial_t\lr{\sqrt{K}\lr{z-\nu_t^K}} ,\nabla f\lr{\sqrt{K}\lr{z-\nu_t^K}}\right\rangle.
		\end{multline*}
		We handle the first and then the second term on the right. We have for the first one
		\begin{align*}
			\mathscr{H}^Kf(z)=\sum_{e,y}\theta_e^{y} Kz_y\E\left[f\left(z+\sum_{x\in\mathbf X}\frac{\beta_e^{x,y}}{K}e_x\right)-f(z)\right],
		\end{align*}
		where $(e_x)_{x\in\mathbf X}$ denotes the canonical basis of $\mathbb R_+^{\mathbf X}$.
	As a consequence,
		\begin{align*}
			\mathscr{H}^Kf\big(\sqrt{K}&\lr{\cdot -\nu_t^K}\Big)(z)\\
			&=\sum_{e,y} \theta_e^{y} K z_y \E\Bigg[f\lr{\sqrt{K}\lr{z+\sum_{x}\frac{\beta_e^{x,y}}{K}e_x-\nu_t^K}}\\
			&\qquad\qquad\qquad\qquad\qquad\qquad-f\lr{\sqrt{K}\lr{z-\nu_t^K}}\Bigg]\\
			&=\sum_{e,y} \theta_e^{y}K z_y \E\lrcr{f\lr{u+\sum_{x}\frac{ \beta_e^{x,y}}{\sqrt K}e_x}-f\lr{u}}.
		\end{align*}
		We consider, for all $e,y$, the $\mathbb R^{\mathbf X}$-valued random vector $\bar \beta_e^y$, the vector $\bar B_e^y$ and the symmetric matrix $\bar C_e^y$ defined by
		\[
		\bar \beta_e^y:=\sum_{x\in\mathbf X}\beta_e^{x,y}e_x,\ \bar B_e^y:=\mathbb E(\bar\beta_e^y),\ \bar C_e^y:=\mathbb E\left(\bar\beta_e^{y}\,(\bar \beta_e^{y})^{\top}\right).
		\]
		We thus obtain, using an order 2 Taylor expansion en denoting by $H_f$ the Hessian matrix of $f$, 
		\begin{align*}
			\mathscr{H}^Kf\big(\sqrt{K}&\lr{\cdot -\nu_t^K}\Big)(z)\\
			&\underset{K \rightarrow \infty}{=}\sum_{e,y} \theta_e^{y}Kz_y \E\left(\frac1{\sqrt K}\nabla f(u)^{\top}\,\bar\beta_e^y+\frac1{2K}\, (\bar\beta_e^y)^{\top}\, H_f(u)\, \bar\beta_e^y+O(K^{-3/2})\right)\\
			&\underset{K \rightarrow \infty}{=}  \sum_{e,y} \theta_e^{y}z_y  \sqrt K\nabla f(u)^{\top}\,\bar B_e^y+ \sum_{e,y} \frac{\theta_e^y z_y}{2}\, \text{Tr}(\bar C_e^y\, H_f(u))+O(K^{-1/2}),
		\end{align*}
		where $O(K^{-3/2})$ and $O(K^{-1/2})$ are uniform over $t$ and $u$ in compact sets.
For the second term in the expression of  $\mathscr{L}_t^Kf(u)$, we use the fact that
		\begin{align*}
			\partial_t \nu_t^K=\sum_{e,y}   \bar B_{e}^{y}  \theta_e^{y}\nu_t^K(y)
		\end{align*}
		to deduce that
		\begin{align*}
			&\left\langle \partial_t\lr{\sqrt{K}\lr{z-\nu_t^K}} ,\nabla f\lr{\sqrt{K}\lr{z -\nu_t^K}}\right\rangle\\
			&\qquad\qquad\qquad\qquad\qquad\qquad=-\sqrt{K} \sum_{e,y}    \theta_e^{y}\nu_t^K(y)\,(\nabla f(u))^{\top}\bar B_{e}^{y} .
		\end{align*}
		We finally obtain, using also the fact that $z=u/\sqrt K+\nu^K_t$, with $\nu_t^K\to \nu_t$ as $K\to\infty$, that
		\begin{align*}
			\mathscr L^K_t f(u)
			&\underset{K \rightarrow \infty}{=}  \sum_{e,y} \theta_e^{y}u_y(\nabla f(u))^{\top}\bar B_{e}^{y} +\frac{\theta_e^{y}\nu_t^K(y)}{2}\, \text{Tr}(\bar C_e^y\, H_f(u))+O(K^{-1/2})
		\end{align*}
		%où $o\lr{\frac{1}{K}}$ est uniforme en $t$ et $u$ sur les compacts. 
		and we deduce that, for all compact subset $C\subset \mathbb R_+\times \mathbf X$,
		\[
		\lim\limits_{K \to +\infty} \, \sup_{(t,u)\in C} \lrva{ \mathscr{L}_t^{K}f\lr{u} -\mathscr{L}_t^{\infty}f\lr{u}}=0,
		\]
		where
		\begin{align*}
			\mathscr{L}_t^{\infty}f\lr{u} &:= \sum_{e,y} \, \theta_e^{y}u_y (\nabla f(u))^{\top}\bar B_{e}^{y}
				+\frac{\theta_e^{y}\nu_t(y)}{2}\, \text{Tr}(\bar C_e^y\, H_f(u))
		\end{align*}
		
		\medskip\noindent \textit{Step 5. Identification of the limit points infinitesimal generator.}
		For all $0\leq t\leq T$ and $f \in C_b^2\lr{\R^{L\times \lrva{E}},\R}$, we define on $\mathbb{D}\lr{\lrcr{0,T},\R^{L\times \lrva{E}}}$ the function
		\[
		\xi_t^f(u)= f\lr{u_t}-f\lr{u_0}-\int_0^t \mathscr{L}_s^{\infty}f\lr{u_s} \mathrm{d}s.
		\]
		According to Steps~2, the sequence $\displaystyle \lr{\xi_t\lr{U^K}}_{t\in[0,T],K\geq 1}$ is uniformly integrable.
		
		Let $Q$ be a limit law of the tight sequence $(U^K)$ and let $U^\infty$ be a random variable in $\mathbb D([0,T])$ with law $Q$.
		Our aim is to show that $\lr{\xi_t(U^\infty), t >0}$ is a martingale  $U^\infty$. This is true as soon as $\mathbb{E}\lr{H\lr{U^\infty}}=0$ for any function $H$ defined as follows
		\begin{align*}
			H\lr{u}=g_1\lr{u_{s_1}}\cdots g_k\lr{u_{s_k}}\lr{\xi_t\lr{u}-\xi_s\lr{u}}
		\end{align*}
		for $0 \leq s_1 \leq \cdots \leq s_k \leq s \leq t$ and $\displaystyle g_1, \cdots, g_k \in C_b(\mathbb R_+^{\mathbf X})$, which we will show now.

		For all $K\geq 1$, we set
		\begin{align*}
			H^K(u)=g_1\lr{u_{s_1}}\cdots g_k\lr{u_{s_k}}\lr{\xi^{K}_t\lr{u}-\xi^{K}_s\lr{u}}
		\end{align*}
		where
		\begin{align*}
			\xi^{K}_t(u)= f\lr{u_t}-f\lr{u_0}-\int_0^t \mathscr{L}_s^{K}f\lr{u_s} \mathrm{d}s .
		\end{align*}
		We have, for any $K_0$ large enough which will be chosen later,
		\begin{align}
			\E\lrcr{H\lr{U^\infty}}&= \E\lrcr{H\lr{U^\infty}-H^{K_0}\lr{U^\infty}} \label{eq:circle1}\\ %& \circled{1} \\
			&+   \E\lrcr{H^{K_0}\lr{U^\infty}-H^{K_0}\lr{U^{K}}}  \label{eq:circle2}\\ % & \circled{2} \\
			&+   \E\lrcr{H^{K_0}\lr{U^K}-H^{K}\lr{U^{K}}}  \label{eq:circle3}\\ %& \circled{3} \\
			&+ \E\lrcr{H^{K}\lr{U^K}}  \notag.%& \circled{4} \\
		\end{align}
		We first observe that $\E\lrcr{H^{K}\lr{U^K}} =0$ (since $\xi_t^{K,v}(U^K)$ is a uniformly integrable martingale) and we will show that each other term on the right hand side goes $0$ when $K\to+\infty$, which will then lead $\mathbb{E}\lr{H\lr{U^\infty}}=0$ and conclude this step. We have, for the first term~\eqref{eq:circle1},
		\begin{align*}
			\Big|\E\Big(H&\lr{U^\infty}-H^{K_0}\lr{U^\infty}\Big)\Big|\\
			&=\left|\E\left(g_1\lr{U^\infty_{s_1}}\cdots g_k\lr{U^\infty_{s_k}} \int_s^t \mathscr{L}_r^{K_0}f\lr{U^\infty_{r}}-\mathscr{L}_r^{\infty}f\lr{U^\infty_{r}}\mathrm{d}r \right)\right| \\
		&\leq \lrdb{g_1}_{\infty} \cdots \lrdb{g_k}_{\infty} \int_s^t \E\left({\lrva{
				\mathscr{L}_{r}^{K_0}f\lr{U_{r}}-\mathscr{L}_{r}^{\infty}f\lr{U_{r}} }}\right) \mathrm{d}r
	\end{align*}
	where
	\begin{align*}
		\E\Big(\lrva{
			\mathscr{L}_{r}^{K_0}f\lr{U^\infty_{r}}-\mathscr{L}_{r}^{\infty}f\lr{U^\infty_{r}} }\Big)&\leq \E\lrcr{\lrva{
				\mathscr{L}_{r}^{K_0}f\lr{U^\infty_{r}}-\mathscr{L}_{r}^{\infty}f\lr{U^\infty_{r}} } \mathds{1}_{\lrdb{U^\infty_{r}}\leq A}}\\ 
		&\qquad\qquad+\E\lrcr{\lrva{
				\mathscr{L}_{r}^{K_0}f\lr{U^\infty_{r}}-\mathscr{L}_{r}^{\infty}f\lr{U^\infty_{r}} } \mathds{1}_{\lrdb{U^\infty_{r}}> A}}\\
		&\leq {\sup_{\lrdb{u}\leq A, 0\leq q \leq T} \lrva{
				\mathscr{L}_{q}^{K_0}f\lr{u}-\mathscr{L}_{q}^{\infty}f\lr{u} }} \\
		&	\qquad\qquad\qquad +C\E\lr{\lrdb{U^\infty_{q}} \mathds{1}_{\lrdb{U^\infty_{q}}> A}}\\
		&\xrightarrow[K_0\to+\infty]{} C_{T}\E\lr{\lrdb{U^\infty_{q}} \mathds{1}_{\lrdb{U^\infty_{q}}> A}}\\
		&\xrightarrow[A\to+\infty]{} 0,
	\end{align*}
	with $C_{T}$ a constant which only depends on $T$, and where we have used Step~4 and the uniform integrability of $\sup_{0\leq q \leq T} \lrdb{U^\infty_{q}}$ from Step~2. This convergence being uniform in $r\in[0,T]$, we deduce that
	\begin{align}
		\label{convLK0LKinf} 
		\int_s^t \E\left({\lrva{
				\mathscr{L}_{r}^{K_0}f\lr{U_{r}}-\mathscr{L}_{r}^{\infty}f\lr{U_{r}} }}\right) \mathrm{d}r \xrightarrow[K_0 \to +\infty]{} 0 
	\end{align}
 and hence that
	\begin{align*}
		\Big|\E\Big(H&\lr{U^\infty}-H^{K_0}\lr{U^\infty}\Big)\Big|\xrightarrow[K_0 \to +\infty]{} 0 
	\end{align*}

	Let us now show that the second term~\eqref{eq:circle2} goes to  $0$ when $K\to+\infty$. 
	On the one hand, we note that $H^{K_0}$ is continuous for the norm $\lrdb{.}_{\infty}$ and, on the other hand, that, $U^\infty$ being almost surely continuous (the proof is identical to the one for the limit of  $\nu^K/K$ in the proof of Theorem~\ref{thm:LGN}), $U^K$ converges in law for the uniform norm on $[0,t]$ to $U^\infty$ (up to the extraction of a subsequence), and consequentially, using the uniform integrability of $H^{K_0}(U^K)$ (similarly as for the uniform integrability of $H^{K_0}(U^{K_0})$),
	\[
	\E\lrcr{H^{K_0}\lr{U}} \underset{K \to +\infty}{\longrightarrow} \E\lrcr{H^{K_0}\lr{U^{K}}}.
	\]
	This shows that the second term goes to $0$ when $K\to+\infty$.
	
	Let us finally show that the third term~\eqref{eq:circle3} converges to $0$ when $K_0$ and $K$ go to infinity. We have
	\begin{align*}
		\mathbb E\Big|H^{K_0}\lr{U^K}&-H^{K}\lr{U^{K}}\Big|\leq \lrdb{g_1}_{\infty} \cdots \lrdb{g_k}_{\infty}  \int_s^t \mathbb E\lrva{\mathscr{L}_{r}^{K}f\lr{U_{r}^K}-\mathscr{L}_{r}^{K_0}f\lr{U^K_{r}}}\mathrm{d}r\\
		&\leq \lrdb{g_1}_{\infty} \cdots \lrdb{g_k}_{\infty} \Bigg(  \int_s^t\E  \lrva{\mathscr{L}_{r}^{K}f\lr{U_{r}^K}-\mathscr{L}_{r}^{\infty}f\lr{U_{r}^K}}\mathrm{d}r\\
		&\qquad\qquad\qquad\qquad\qquad\qquad+ \int_s^t\E \lrva{\mathscr{L}_{r}^{\infty}f\lr{U_{r}^K}-\mathscr{L}_{r}^{K_0}f\lr{U^K_{r}}}\mathrm{d}r     \Bigg)\\
		&\xrightarrow[K,K_0\to+\infty]{} 0,
	\end{align*}
	according to \eqref{convLK0LKinf}.
	
	Finally, letting $K_0$ and $K$ go to infinity, we have shown that the limit process $U^\infty$ is a solution to the following martingale problem: for all function $f \in C_b^2\lr{\R^{L\times \lrva{E}},\R}$, the process
	\[
	f\lr{U^\infty_t}-f\lr{U^\infty_0}-\int_0^t \mathscr{L}_t^{\infty}f\lr{U^\infty_s} \mathrm{d}s \text{ is a martingale.}
	\]

	\textit{Step 6. Identification of the limit as the unique solution of a martingale problem.}
	Since we have
	\begin{align*}
		\mathscr{L}_t^{\infty}f(u) &= (Au)^{\top} \nabla f(u) 
		+\frac1{2}\, \text{Tr}(\sigma_t H_f(u) \sigma_t^{\top}),
	\end{align*}
	where we recall that $\sigma_t^{\top}\sigma_t=\sum_{e,y} \theta_e^{y}\nu_t(y) \bar C_e^{y}$, we deduce form~\cite[Section~6.5]{KS2014} that the martingale admits a unique solution $U$, which admits the representation
	\[
	U_t=e^{tA}U_0+\int_0^t e^{(t-s)A}\sigma_s\,\mathrm dW_s,
	\]
	where $W$ is standard $|X|$-dimensional Brownian motion. This concludes the proof of Theorem~\ref{thm:fluctuations}.
	\end{proof}

					\bibliographystyle{plain} 
					\bibliography{biblio}
					
				\end{document}